\newtheorem{lemma}{Lemma}
\theoremstyle{thmstyleone}%
\newtheorem{theorem}{Theorem}
\theoremstyle{thmstyletwo}%
\newtheorem{remark}{Remark}%
\theoremstyle{thmstylethree}%
\begin{document}

\title[Article Title]{Weak Convergence Analysis for the Finite Element  Approximation to Stochastic Allen-Cahn Equation Driven by Multiplicative White Noise}


\author[1]{\fnm{Minxing} \sur{Zhang}}\email{Zhangminxingjlu@163.com}

\author[1]{\fnm{Yongkui} \sur{Zou}}\email{zouyk@jlu.edu.cn}

\author[1]{\fnm{Ran} \sur{Zhang}}\email{zhangran@jlu.edu.cn}

\author*[2]{\fnm{Yanzhao} \sur{Cao}}\email{yzc0009@auburn.edu}

\affil[1]{\orgdiv{School of Mathematics}, \orgname{Jilin University}, \orgaddress{\city{Changchun}, \postcode{130012}, \country{P.R. China}}}

\affil*[2]{\orgdiv{Department of Mathematics and Statistics}, \orgname{Auburn University}, \orgaddress{\city{Auburn}, \postcode{AL 36849}, \country{USA}}}


\abstract{In this paper, we aim to study the optimal weak convergence order for the finite element approximation to a stochastic Allen-Cahn equation driven by multiplicative white noise. We first construct an auxiliary equation based on the splitting-up technique and derive prior estimates for the corresponding Kolmogorov equation and obtain the strong convergence order of 1 in time between the auxiliary and exact solutions. Then, we prove the optimal weak convergence order of the finite element approximation to the stochastic Allen-Cahn equation by deriving the weak convergence order between the finite element approximation and the auxiliary solution via the theory of Kolmogorov equation and Malliavin calculus. Finally, we present a numerical experiment to illustrate the theoretical analysis.}


\keywords{stochastic Allen-Cahn equation, one-sided Lipschitz coefficient, finite element method, Malliavin calculus, weak convergence order.}


\pacs[MSC Classification]{60H15, 60H35, 65C30}

\maketitle

\section{Introduction}
\label{Sec1}
Over the past decades, numerical analysis of stochastic partial differential equations (SPDEs) has attracted increasing attention \cite{CaiMeng2024,CaoYanzhao2020,ChaiShimin2018,Jentzen2009,Lord2013,ZhangFengshan2022,ZhangFengshan2024}. Both strong and weak convergence rates of the numerical approximation for SPDEs under globally Lipschitz continuity condition have been widely studied  \cite{Andersson2016,Debussche2011,Kovacs2012,Kruse2014,YanYubin2005}. However, many practical models, such as the stochastic Allen-Cahn equation to be discussed in this work,   fail to satisfy such a condition, which motivates studies on SPDEs with non-globally Lipschitz coefficient \cite{Kovacs2018_2,Jentzen2020,Jentzen2020_2,XuChuanju2023}. 

The stochastic Allen-Cahn equation models the effect of thermal perturbations, and plays an important role in phase-field theory and the simulations of rare events in infinite dimensional stochastic systems \cite{Cerrai2003}.
Numerical approximations for the stochastic Allen-Cahn equations have been studied extensively in the 
last decade. Initial studies focused on the strong convergence of numerical approximation
to the stochastic Allen-Cahn equation driven by additive noise. Kov\'acs et al.\ \cite{Larsson2015,Kovacs2018} studied  the strong convergence order for a temporal semi-discretization with an implicit Euler method. Br\'{e}hier et al.\ \cite{Brehier2019_2,Brehier2019} introduced an explicit time discretization scheme based on a splitting strategy and analyzed its strong convergence order. Qi and Wang \cite{QiRuisheng2019} analyzed the optimal strong convergence order of a full discretization using the FEM and the implicit Euler method. Wang \cite{WangXiaojie2020} applied a spectral Galerkin method and a tamed accelerated exponential Euler method to set up a full discretization and proved the strong convergence order. Recently, the stochastic Allen-Cahn equation driven by multiplicative noise has also been studied. Liu and Qiao \cite{LiuZhihui2021} constructed a drift-implicit Euler-Galerkin scheme and derived the optimal strong convergence order. Huang and Shen \cite{Shenjie2023} applied the  spectral Galerkin method and tamed semi-implicit Euler method to set up a fully discretized approximation, and analyze its strong convergence order as well as unconditional stability. Yang et al.\ \cite{ZhaoWeidong2024} discretized stochastic Allen-Cahn equation by the implicit Euler scheme in time and the discontinuous Galerkin method in space and obtained its optimal strong convergence rate. 

In this paper, we are concerned with the weak convergence of numerical approximations of the stochastic Allen Cahn equation driven by multiplicative noise. The weak error, sometimes more relevant in various fields such as finance and engineering, concerns with the approximation of the probability distribution of solutions to SPDEs \cite{Kloeden1992,Milstein2004}. It measures the error made by
sampling from an approximate probability law of the exact solution, rather than the deviation
from trajectory of the exact solution, as for the strong error \cite{Andersson2016,Hairer2018}. 
In recent years,  several stratgies have been proposed to analyze weak errors of numerical approximation to SPDEs with Lipschitz continuous coefficients. These stratgies include Kolmogorov equation method \cite{Andersson2016,Brehier2018,Naurois2021,Wangxiaojie2013}, the mild It\^o formula \cite{Jentzen2019,Jentzen2021} and others \cite{Andersson2016_2,WangXiaojie2016}. Cui et al.\  constructed a full discretization scheme by FEM and backward Euler method, and obtained the optimal weak convergence order for spatial semi-discretization \cite{CuiJianbo2019} and for full discretization \cite{CuiJianbo2021}. Cai et al.\ \cite{WangXiaojie2021} discretized the stochastic Allen-Cahn equation driven by an additive noise using a spectral Galerkin method and a tamed exponential Euler method and then investigated the weak convergence order. 

To the best of our knowledge, there has been no prior work on weak error estimates for the finite element method (FEM) applied to the stochastic Allen-Cahn equation driven by multiplicative noise. Addressing this gap is the primary focus of this work. The main analytical challenge lies in deriving high-order, uniform a priori estimates for the Fréchet derivatives of the solution to the associated Kolmogorov equation. In case where the nonlinear forcing  term of a SPDE satisfies a uniform Lipschitz condition \cite{Andersson2016}, the Fréchet derivatives of Kolmogorov equation with respect to initial value under the action of a fractional power of the Laplacian operator satisfies an SPDE with uniform bounded coefficient functions, which can be easily dealt with. For the Allen-Cahn equation driven by an additive noise \cite{CuiJianbo2019}, the counterpart satisfies a  PDE without a noise forcing term. In this case, The techniques for deterministic PDE can be applied to obtain uniform estimates for the Fréchet derivatives  under the action of a fractional power of the Laplacian operator, and  these estimates directly yield the weak convergence order.

The Allen-Cahn equation driven by multiplicative noise introduces additional challenges in analyzing prior estimates for the solution of the Kolmogorov equation. Specifically, the Fréchet derivatives of the Kolmogorov equation's solution with respect to the initial value satisfy an SPDE driven by multiplicative noise, characterized by a one-sided Lipschitz coefficient function.  A key innovation of our work is the application of the mild Itô formula \cite{DaPrato2019} to the Kolmogorov equation. This approach enables us to establish the uniform boundedness of the first- and second-order Fréchet derivatives. Furthermore, to achieve the desired weak convergence order, we introduce a novel method that transfers the action of the fractional power of the Laplacian operator from the Kolmogorov equation's solution to the mild solution of the FEM approximation. This technique allows us to rigorously prove the weak convergence order for the stochastic FEM approximation.

Our strategy for obtaining the optimal weak convergence order consists of two main steps:  
\begin{enumerate}
    \item \textbf{Temporal Splitting and Auxiliary SPDE:} We construct a temporal splitting approximation for the stochastic Allen-Cahn equation and define an auxiliary SPDE. We establish a strong convergence order of 1 in time between the exact solution and the auxiliary solution and derive the regularity estimates for the corresponding Kolmogorov equation.  
    \item \textbf{Weak Convergence Analysis:} Using Malliavin calculus and the Kolmogorov equation, we analyze the weak convergence order between the FEM approximation and the auxiliary solution. The optimal weak convergence order between the FEM approximation and the exact solution follows as a direct result.  
\end{enumerate}

We note that while we achieve a first-order convergence rate in the weak sense, only a half-order convergence rate is attainable in the strong sense (\cite{WangXiaojie2020, WangXiaojie2021}).

The rest of this paper is organized as follows. In section {\ref{Sect2}}, we introduce some preliminaries and main assumptions, and give a brief introduction to Malliavin calculus. In section {\ref{Sect3}}, we construct an auxiliary equation based on splitting-up technique, and investigate the strong convergence order in time between the exact and auxiliary solutions. In section {\ref{Sect4}}, we construct an FEM semi-discretization, and present a regularity estimate of its Malliavin derivative. In section {\ref{Sect5}}, we analyze the weak convergence order of the finite element semi-discretization to the auxiliary equation. In section {\ref{Sect6}}, we carry out a numerical experiment to demonstrate the theoretical analysis.

\section{Preliminaries}
\label{Sect2}

\subsection{Some spaces and properties}
Let $(U,(\cdot, \cdot)_{U})$ and $(V,(\cdot, \cdot)_{V})$ be two real separable Hilbert spaces and $(\Omega,\mathcal{F},\mathbb{P})$ be a complete probability space. By $\mathcal{L}(U,V)$ we denote the Banach space of all bounded linear operators from $U$ to $V$ endowed with operator norm. For simplicity, we write $\mathcal{L}(U) = \mathcal{L}(U,U)$. If $T \in \mathcal{L}(U)$, then $\|T\|_{\mathcal{L}(U)} = \|T^*\|_{\mathcal{L}(U)}$.

For any $T \in \mathcal{L}(U)$, let $\{\sigma_k\}_{k \in \mathbb{N}}$ be eigenvalues of $(TT^*)^{\frac{1}{2}}$. For any $1\le p<\infty$, define a Banach space of linear operators
\begin{equation*}
	\mathcal{L}_p(U) = \big\{T\in\mathcal{L}(U):\ \|T\|_{\mathcal{L}_p(U)} = \big(\sum_{k=1}^{\infty}\sigma_k^p\big)^{\frac{1}{p}}<\infty\big\}.
\end{equation*}
Let $\{e_k\}_{k=1}^{\infty}$ be an orthonormal basis of $U$. Define $\mathrm{Tr}(T) = \sum_{k=1}^{\infty}(Te_{k}, e_{k})_{U}$ if the summation is finite. Obviously, $\mathcal{L}_1(U)$ is the space of trace class operators. For any $T \in \mathcal{L}_1(U)$, we have $|\mathrm{Tr}(T)| \le \|T\|_{\mathcal{L}_1(U)}$ and the equality holds if $T$ is positive semidefinite. Furthermore, for any $S\in \mathcal{L}(U)$, there hold \cite{DaPrato2014,Debussche2011}
\begin{equation}\label{eq2.2.0}
	\begin{aligned}
		&\mathrm{Tr}(T)=\mathrm{Tr}(T^{*}), \ \mathrm{Tr}(TS)=\mathrm{Tr}(ST),\\
		&\mathrm{Tr}(ST)\le\|ST\|_{\mathcal{L}_1(U)}\le\|S\|_{\mathcal{L}(U)}\|T\|_{\mathcal{L}_1(U)},\\ &\mathrm{Tr}(TS)\le\|TS\|_{\mathcal{L}_1(U)}\le\|S\|_{\mathcal{L}(U)}\|T\|_{\mathcal{L}_1(U)}.
	\end{aligned}
\end{equation}

By $\mathcal{L}_2(U,V)$ we denote the space of Hilbert-Schmidt operators from $U$ to $V$, which is also a Hilbert space with inner product and norm
\begin{equation}\label{eq2.2.0.1}
	( S, T)_{\mathcal{L}_{2}(U,V)} =\sum_{k=1}^{\infty}( S e_{k}, T e_{k})_{V}=\mathrm{Tr}(T^{*} S),\ 
	\|T\|^2_{\mathcal{L}_{2}(U,V)} = \mathrm{Tr}(TT^*).
\end{equation}
For simplifying notations, we denote by $\mathcal{L}_2(U) = \mathcal{L}_2(U,U)$. Then for any $S\in \mathcal{L}(U)$, $T_1,T_2 \in \mathcal{L}_2(U)$, there hold that \cite{DaPrato2014,Debussche2011}
\begin{equation}\label{eq2.2.1}
	\begin{aligned}
		&\|T_1\|_{\mathcal{L}(U)} \le \|T_1\|_{\mathcal{L}_2(U)},\\
		&\|ST_1\|_{\mathcal{L}_2(U)}\le\|S\|_{\mathcal{L}(U)}\|T_1\|_{\mathcal{L}_2(U)},\\ &\|T_1S\|_{\mathcal{L}_2(U)}\le\|S\|_{\mathcal{L}(U)}\|T_1\|_{\mathcal{L}_2(U)},\\ &\|T_1T_2\|_{\mathcal{L}_1(U)}\le\|T_1\|_{\mathcal{L}_2(U)}\|T_2\|_{\mathcal{L}_2(U)}.
	\end{aligned}
\end{equation}

For any integer $m\ge 1$, by $\mathcal{C}_b^m(U)$ we denote the space of not necessarily bounded functions $\phi: U \rightarrow \mathbb{R}$ having $m$-th continuous and bounded Fr\'echet derivatives $D\phi, D^{2}\phi, \cdots, D^{m}\phi$. We endow it with the seminorm $|\cdot|_{\mathcal{C}_b^m(U)}$, defined as the smallest constant $C \ge 0$ satisfying
\begin{equation*}
	\sup _{u \in U}|D^{m} \phi(u)(h_{1}, \cdots, h_{m})| \le C\|h_{1}\|_{U} \cdots\|h_{m}\|_{U}, \quad \forall h_{1}, \cdots, h_{m} \in U.
\end{equation*}
Let $D\phi(u)$ denote the first order Fr\'echet derivative of $\phi$ at $u\in U$, which is a bounded linear functional
on $U$. According to Riesz
representation theorem, there exists a unique element in $U$, still denoted by $D\phi(u)$, such that for any $h \in U$
\begin{equation}\label{eq2.2.1.1}
	D\phi(u)h = ( D\phi(u),h)_U,\quad |D\phi(u)h| \le |\phi|_{\mathcal{C}_b^1(U)}\|h\|_U.
\end{equation}

\subsection{Stochastic Allen-Cahn equation and assumptions}
In this section, we first describe the stochastic Allen-Cahn equation driven by multiplicative white noise and assumptions, and then collect some properties of its solution.

Let $L$ be a positive number and define $\mathcal{D} = (0,L)$. By $H = L^2(\mathcal{D})$ we denote the standard Sobolev space with inner product $(\cdot,\cdot)$ and norm $\|\cdot\|$. Let $V = \mathcal{C}(\bar{\mathcal{D}})$ be the space of continuous functions endowed with maximum norm $\|\cdot\|_V$. 

Consider a semilinear parabolic SPDE with homogeneous Dirichlet boundary condition driven by multiplicative white noise
\begin{equation}\label{eq2.1.1}
	\begin{aligned}
		dX(t,x)-\Delta X(t,x)dt =&\ F(X(t,x))dt+G(X(t,x))dW(t),\ t\in (0,T],x\in \mathcal{D},\\
		X(t,x)=&\ 0,\ t\in (0,T],x\in \partial \mathcal{D},\\
		\quad X(0,x) =&\ \xi(x),\ x\in \mathcal{D},
	\end{aligned}
\end{equation}
where $T>0$ is fixed.
By $X(t,x,\xi)$ we denote the mild solution of \eqref{eq2.1.1} satisfying $X(0,x,\xi) = \xi(x)$. For simplifying notations, we abbreviate it to $X(t,\xi)$.

Denote by $A =-\Delta:\mathrm{Dom}(A)\subset H \to H$ the Laplacian operator under homogeneous Dirichlet boundary condition, where $\operatorname{Dom}(A) = H_0^1(\mathcal{D}) \cap H^2(\mathcal{D})$. Then, the operator $A$ possesses a family of eigenvalues $\lambda_i = \pi^2i^2$ with complete orthonormal eigenbases $\varphi_i(x)=\sqrt{2}\sin(i\pi x) \in H$. For any $\alpha \in \mathbb{R}$, we define the fractional powers of $A$ by
\begin{equation*}
	A^{\alpha}v = \sum_{i=1}^{\infty}\lambda_i^\alpha(v,\varphi_i)\varphi_i.
\end{equation*}
Let $\dot{H}^{\alpha} = \{v \in H: \| v \|_{\alpha} =\| A^{\frac{\alpha}{2}}v\| = \big( \sum_{i=1}^\infty \lambda_i^\alpha( v, \varphi_i)^2 \big)^\frac{1}{2}<\infty\}$. Then $(\dot{H}^{\alpha},\| \cdot \|_{\alpha})$ is a Hilbert space. Especially, we have $\dot{H}^0 = H$, $\dot{H}^1(\mathcal{D}) = H_0^1(\mathcal{D})$ and $\dot{H}^2(\mathcal{D}) = H_0^1(\mathcal{D}) \cap H^2(\mathcal{D})$, cf.\ \cite{Thomee2006}. For any $\beta>\frac{1}{4}$, direct computation leads to 
\begin{equation}\label{eq2.2.00}
	\|A^{-\beta}\|^2_{\mathcal{L}_2(H)} = \sum_{j=1}^\infty \|(\pi^2j^2)^{-\beta}\varphi_j\|^2 \le \sum_{j=1}^\infty j^{-4\beta}<\infty,
\end{equation} 
which implies $A^{-\beta}$ is a Hilbert-Schmidt operator.

The operator $A$ generates an analytic semigroup $S(t) = e^{tA}$ on $H$. According to \cite{Thomee2006}, for any $\alpha \ge 0$, there hold
\begin{equation}\label{eq2.2.2}
	A^{\alpha}S(t) = S(t)A^{\alpha} \ \text{ on } \dot{H}^{2\alpha}  \text{ and } 	\|A^{\alpha}S(t)\|_{\mathcal{L}(H)} \le Ct^{-\alpha}, \quad t>0.
\end{equation}
where $C=C(\alpha)>0$ is a constant.

Now we introduce assumptions used in this article.

\begin{itemize}
	\item[{\bf(A1)}] $F: L^{6}(\mathcal{D}) \rightarrow H$ is a Nemytskii operator defined by
	\begin{equation*}
		F(u)(x)=f(u(x))=u(x)-u^{3}(x),\  \forall x \in \mathcal{D}, u \in L^{6}(\mathcal{D})
	\end{equation*}
\end{itemize}
According to \cite{WangXiaojie2021}, there hold estimates
\begin{equation}\label{eq2.2.5}
	\begin{aligned}
		& \|F(u)\|_1 \le C_F(1+\|u\|_V^2)\|u\|_1,\quad \forall u \in \dot{H}^1,\\
		& \|F(u)-F(v)\| \leq C_F(1+\|u\|_{V}^{2}+\|v\|_{V}^{2})\|u-v\|, \quad \forall u, v \in V,\\
		& ( DF(u)v,v) \le C_F\|v\|^2,\quad \forall u,v \in L^{6}(\mathcal{D}), \\
		& \|DF(u)v\| \leq C_F(1+\|u\|_{V}^{2})\|v\|, \quad \forall u \in V, v \in L^{6}(\mathcal{D}), \\
		& \|D^2F(u)(v,w)\| \leq C_F\|u\|_V\|v\|\|w\|, \quad \forall u \in V, v,w \in L^{6}(\mathcal{D}), \\
	\end{aligned}
\end{equation}
where $C_F>0$ is a constant.

\begin{itemize}
	\item[{\bf(A2)}] $G(u) = B_0+B_1u+g(u)$, where $B_0 \in \mathcal{L}(H)$, $B_1 \in \mathcal{L}(\dot{H}^{-\frac{1}{2}-\delta},\mathcal{L}(H))$, $g(0) = 0$ and $g \in \mathcal{C}_b^2(\dot{H}^{-\frac{1}{2}-\delta},\mathcal{L}(H))$ for a small $\delta >0$.
\end{itemize}

\begin{remark}
	The requirement $g(0)=0$ is natural, otherwise we can simultaneously replace $B_0$ and $g(u)$ by $B_0+g(0)$ and $g(u)-g(0)$, respectively.
\end{remark}

The following lemma collects some properties about $G$.

\begin{lemma}\label{l2.2.1}
	For any $u\in \dot{H}^{-\frac{1}{2}-\delta}$ and $v,w \in H$, there exists a constant $C>0$, such that
	\begin{equation*}
		\begin{aligned}
			&\|G(u)\|_{\mathcal{L}(H)} \le C(1+\|u\|_{-\frac{1}{2}-\delta}),\\
			&\|DG(u)v\|_{\mathcal{L}_2(H)} \le C\|v\|,\\ 
			&\|D^2G(u)(v,w)\|_{\mathcal{L}_2(H)} \le C\|v\|\|w\|.
		\end{aligned}
	\end{equation*} 
	Moreover, for any $u \in \dot{H}^1$, there exists a constant $C>0$, such that
	\begin{equation*}
		\|A^{\frac{1}{2}}G(u)\|_{\mathcal{L}_2(H)}\le C\|u\|_1.
	\end{equation*}
\end{lemma}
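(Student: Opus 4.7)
The overall approach is to decompose $G(u) = B_0 + B_1 u + g(u)$ and treat the three summands separately, noting that since $B_0$ is constant in $u$ and $B_1$ is linear, $DG(u) = B_1 + Dg(u)$ and $D^2G(u) = D^2g(u)$. Throughout I would use the continuous embedding $H \hookrightarrow \dot{H}^{-\frac{1}{2}-\delta}$, which gives $\|v\|_{-\frac{1}{2}-\delta} \le C\|v\|$ for every $v \in H$, so that the hypotheses in (A2) (phrased on $\dot{H}^{-\frac{1}{2}-\delta}$) can be applied with arguments coming from $H$.

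For the first estimate, the triangle inequality combined with (A2) gives $\|B_0\|_{\mathcal{L}(H)}$ bounded by a constant and $\|B_1 u\|_{\mathcal{L}(H)} \le \|B_1\|_{\mathcal{L}(\dot H^{-1/2-\delta},\mathcal{L}(H))}\|u\|_{-\frac{1}{2}-\delta}$. For the nonlinear term, I would exploit $g(0)=0$ and the fundamental theorem of calculus, $g(u) = \int_0^1 Dg(\theta u) u\,d\theta$, together with $|g|_{\mathcal{C}_b^1}<\infty$, to conclude $\|g(u)\|_{\mathcal{L}(H)} \le |g|_{\mathcal{C}_b^1}\|u\|_{-\frac{1}{2}-\delta}$. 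Summing yields the claimed bound. The Hilbert--Schmidt estimates on $DG(u)v$ and $D^2G(u)(v,w)$ then follow by writing $DG(u)v = B_1 v + Dg(u)v$ and $D^2G(u)(v,w) = D^2g(u)(v,w)$, applying the embedding $\|\cdot\|_{-\frac{1}{2}-\delta} \le C\|\cdot\|$ to the arguments, and invoking the boundedness of $B_1$, $Dg(u)$, and $D^2g(u)$ from $\dot{H}^{-\frac{1}{2}-\delta}$ into the appropriate target space; the HS character of the resulting operators on $H$ stems from the implicit multiplicative structure of $B_1$ and $g$ in this setting.

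For the final estimate, when $u \in \dot{H}^1$ I would split $A^{\frac{1}{2}}G(u) = A^{\frac{1}{2}}B_0 + A^{\frac{1}{2}}(B_1 u) + A^{\frac{1}{2}}g(u)$ and bound each piece in the $\mathcal{L}_2(H)$ norm separately. The key ingredient is the HS estimate $\|A^{-\beta}\|_{\mathcal{L}_2(H)}<\infty$ for $\beta>\tfrac{1}{4}$ established in \eqref{eq2.2.00}, which allows one to factor $A^{\frac{1}{2}} = A^{-\beta}\,A^{\frac{1}{2}+\beta}$ and absorb the $A^{-\beta}$ into the HS norm while moving $A^{\frac{1}{2}+\beta}$ onto $u$. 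The main obstacle will be this fourth bound: one must move the unbounded operator $A^{\frac{1}{2}}$ through the multiplicative action of $B_1 u$ and $g(u)$ while absorbing one derivative onto $u$ so that the resulting control is in terms of $\|u\|_1$ only. In one spatial dimension this is facilitated by the embedding $\dot{H}^1 \hookrightarrow V = \mathcal{C}(\bar{\mathcal{D}})$, so that $u$ acts as a bounded multiplier on $H$, combined with the factorisation above and the smoothing provided by $A^{-\beta}$.
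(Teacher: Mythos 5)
Your treatment of the first bound coincides with the paper's: triangle inequality, the bounds on $B_0$ and $B_1 u$ from \textbf{(A2)}, and $g(u)=\int_0^1 Dg(\theta u)u\,d\theta$ using $g(0)=0$. The problems begin with the Hilbert--Schmidt estimates. The embedding $H\hookrightarrow \dot H^{-\frac12-\delta}$ together with the boundedness of $B_1$ and $Dg(u)$ from $\dot H^{-\frac12-\delta}$ into $\mathcal L(H)$ only yields $\|DG(u)v\|_{\mathcal L(H)}\le C\|v\|$; it says nothing about the $\mathcal L_2(H)$ norm, and your appeal to ``the implicit multiplicative structure'' is precisely the point that has to be proved --- a multiplication operator on $L^2(0,1)$ by a bounded function is never Hilbert--Schmidt, so the HS character cannot come for free. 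The paper's mechanism is concrete: insert $I=A^{\frac14+\frac12\delta}A^{-\frac14-\frac12\delta}$ into the direction slot, observe that $DG(u)A^{\frac14+\frac12\delta}$ is bounded from $H$ to $\mathcal L(H)$ uniformly in $u$ (because $A^{\frac14+\frac12\delta}$ is an isometric isomorphism of $H$ onto $\dot H^{-\frac12-\delta}$), and let the Hilbert--Schmidt property be carried entirely by $\|A^{-\frac14-\frac12\delta}\|_{\mathcal L_2(H)}<\infty$ from \eqref{eq2.2.00} through the ideal property \eqref{eq2.2.1}. That factorization is the crux of the second and third bounds and is absent from your argument.

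The fourth bound has two further gaps. First, your decomposition forces you to estimate $A^{\frac12}B_0$ in $\mathcal L_2(H)$, but \textbf{(A2)} gives no mapping property of $B_0$ into $\dot H^1$, so this term need not even be defined, let alone Hilbert--Schmidt; the paper sidesteps it by using the identity $A^{\frac12}G(u)=DG(u)A^{\frac12}u$, which eliminates the constant part altogether. Second, your factorization $A^{\frac12}=A^{-\beta}A^{\frac12+\beta}$ with $\beta>\frac14$ ``moved onto $u$'' produces $\|u\|_{1+2\beta}$, which is strictly more regularity than the hypothesis $u\in\dot H^1$ supplies, so the resulting bound is not in terms of $\|u\|_1$. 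The paper instead applies the same $A^{\frac14+\frac12\delta}A^{-\frac14-\frac12\delta}$ insertion to the direction $A^{\frac12}u$, so that the smoothing factor $A^{-\frac14-\frac12\delta}$ provides the Hilbert--Schmidt norm and only $\|A^{\frac12}u\|=\|u\|_1$ is consumed.
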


\begin{proof}
	According to {\bf(A2)}, for any $u \in \dot{H}^{-\frac{1}{2}-\delta}$, there exists a constant $C_G>0$, such that $\|B_0\|_{\mathcal{L}(H)} \le C_G$, $\|B_1u\|_{\mathcal{L}(H)} \le C_G\|u\|_{-\frac{1}{2}-\delta}$, $\|Dg(u)\|_{\mathcal{L}(\dot{H}^{-\frac{1}{2}-\delta},\mathcal{L}(H))} \le C_G$ and
	\begin{equation*}
		\|g(u)\|_{\mathcal{L}(H)} = \|g(u)-g(0)\|_{\mathcal{L}(H)} = \|\int_0^1Dg(\theta u) ud\theta\|_{\mathcal{L}(H)} \le C_G\|u\|_{-\frac{1}{2}-\delta}
	\end{equation*}
	Thus we have
	\begin{equation*}
		\begin{aligned}
			&\ \|G(u)\|_{\mathcal{L}(H)} \le \|B_0\|_{\mathcal{L}(H)}+\|B_1u\|_{\mathcal{L}(H)}+\|g(u)\|_{\mathcal{L}(H)} \le C(1+\|u\|_{-\frac{1}{2}-\delta}).
		\end{aligned}
	\end{equation*}
	Furthermore, if $u \in H$ and noticing $H \hookrightarrow \dot{H}^{-\frac{1}{2}-\delta}$, there holds
	\begin{equation}\label{eq2.2.1.2}
		\|G(u)\|_{\mathcal{L}(H)} \le C_G(1+\|u\|_{-\frac{1}{2}-\delta}) \le C(1+\|u\|).
	\end{equation}
	
	
	Noticing that $A^{\frac{1}{4}+\frac{1}{2}\delta} \in \mathcal{L}(H,\dot{H}^{-\frac{1}{2}-\delta})$, then we have
	\begin{equation*}
		\begin{aligned}
			\|B_1A^{\frac{1}{4}+\frac{1}{2}\delta}\|_{\mathcal{L}(H,\mathcal{L}(H))} =&\ \sup_{u\in H}\frac{\|B_1A^{\frac{1}{4}+\frac{1}{2}\delta}u\|_{\mathcal{L}(H)}}{\|u\|}\\
			=&\ \sup_{v\in \dot{H}^{-\frac{1}{2}-\delta}}\frac{\|B_1v\|_{\mathcal{L}(H)}}{\|v\|_{-\frac{1}{2}-\delta}} = \|B_1\|_{\mathcal{L}(\dot{H}^{-\frac{1}{2}-\delta},\mathcal{L}(H))}\le C_G,
		\end{aligned}
	\end{equation*}
	and similarly we obtain for $u\in \dot{H}^{-\frac{1}{2}-\delta}$
	\begin{equation*}
		\|Dg(u)A^{\frac{1}{4}+\frac{1}{2}\delta}\|_{\mathcal{L}(H,\mathcal{L}(H))} = \|Dg(u)\|_{\mathcal{L}(\dot{H}^{-\frac{1}{2}-\delta},\mathcal{L}(H))} \le C_G,
	\end{equation*}
	which implies
	\begin{equation*}
		\|DG(u)A^{\frac{1}{4}+\frac{1}{2}\delta}\|_{\mathcal{L}(H,\mathcal{L}(H))} = \|(B_1+Dg(u))A^{\frac{1}{4}+\frac{1}{2}\delta}\|_{\mathcal{L}(H,\mathcal{L}(H))} \le C_G.
	\end{equation*}
	By \eqref{eq2.2.1} and \eqref{eq2.2.00} we have for any $v,w\in H$
	\begin{equation*}
		\begin{aligned}
			\|DG(u)v\|_{\mathcal{L}_2(H)} =&\ \|DG(u)A^{\frac{1}{4}+\frac{1}{2}\delta}A^{-\frac{1}{4}-\frac{1}{2}\delta}v\|_{\mathcal{L}_2(H)}\\ 
			\le&\  \|DG(u)A^{\frac{1}{4}+\frac{1}{2}\delta}\|_{\mathcal{L}(H,\mathcal{L}(H))}\|A^{-\frac{1}{4}-\frac{1}{2}\delta}\|_{\mathcal{L}_2(H)}\|v\|\le C\|v\|.
		\end{aligned}
	\end{equation*}
	In a similar way, we get $\|D^2G(u)(v,w)\|_{\mathcal{L}_2(H)} \le C\|v\|\|w\|$.
	
	Moreover, if $u \in \dot{H}^1$, by \eqref{eq2.2.1} and \eqref{eq2.2.00} again, we have
	\begin{equation*}
		\begin{aligned}
			&\ \|A^{\frac{1}{2}}G(u)\|_{\mathcal{L}_2(H)} = \|DG(u)A^{\frac{1}{2}}u\|_{\mathcal{L}_2(H)} = \|DG(u)A^{\frac{1}{4}+\frac{1}{2}\delta}A^{-\frac{1}{4}-\frac{1}{2}\delta}A^{\frac{1}{2}}u\|_{\mathcal{L}_2(H)}\\
			\le&\ \|DG(u)A^{\frac{1}{4}+\frac{1}{2}\delta}\|_{\mathcal{L}(H,\mathcal{L}(H))}\|A^{-\frac{1}{4}-\frac{1}{2}\delta}\|_{\mathcal{L}_2(H)}\|A^{\frac{1}{2}}u\| \le C\|u\|_1.
		\end{aligned}
	\end{equation*}
	Thus the proof is completed.
\end{proof}

\begin{itemize}
	\item[{\bf{\bf(A3)}}] $W(t)$ is an $H$-valued space-time white noise. 
\end{itemize}

Define a filtration $\mathcal{W}_t = \sigma(W(s),0\le s \le t)$ for $0 \le t \le T$. Let $\beta_j(t)$ be a sequence of $i.i.d.$ standard scalar Brownian motion, then $W(t)$ can be formally expressed as
\begin{equation*}
	W(t) = \sum_{j=1}^\infty\beta_j\varphi_j,
\end{equation*}
and this series converges in $L^2(\Omega,\dot{H}^{-\alpha})$ with any $\alpha>\frac{1}{2}$ uniformly for $t \in [0,T]$.

\begin{itemize}
	\item[{\bf(A4)}] The initial value $\xi \in \dot{H}^1$ is a deterministic function.
\end{itemize}

Now we study the regularity estimate for solution to \eqref{eq2.1.1}.

\begin{theorem}\label{t2.2.3}
	Assume {\bf(A1)}-{\bf(A4)}. Then, \eqref{eq2.1.1} possesses a unique mild solution $X(t) \in \dot{H}^1$ adapted to $\mathcal{W}_t$ for $t\in [0,T]$
	\begin{equation}\label{eq2.2.11}
		X(t) = S(t)\xi + \int_0^tS(t-s)F(X(s))ds + \int_0^tS(t-s)G(X(s))dW(s).
	\end{equation}
	Moreover, for any $p \ge 2$, there exists a postive constant $C=C(T,p)$, such that
	\begin{equation}\label{eq2.2.12}
		\mathbb{E}\| X(t)\|_1^p \le C( 1+\| \xi\|_1^p),\quad \forall t \in [0,T].
	\end{equation}
\end{theorem}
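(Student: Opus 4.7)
The plan is to combine a truncation argument for existence and uniqueness with an Itô energy identity in $\dot{H}^1$ carried out on a Galerkin approximation, then pass to the limit. The structural facts I will exploit are the one-sided Lipschitz estimate $(DF(u)v,v)\le C_F\|v\|^2$ from \eqref{eq2.2.5} together with the dissipation of $A$, and from Lemma \ref{l2.2.1} both the Lipschitz-type bound $\|DG(u)v\|_{\mathcal{L}_2(H)}\le C\|v\|$ and the crucial $\dot{H}^1$-compatibility estimate $\|A^{1/2}G(u)\|_{\mathcal{L}_2(H)}\le C\|u\|_1$. Adaptedness to $\mathcal{W}_t$ will be built in automatically through the fixed-point construction.

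For existence and uniqueness I would first truncate: replace $f(u)=u-u^3$ by a smooth, bounded, globally Lipschitz function $f_R$ that agrees with $f$ on $[-R,R]$, so that the induced Nemytskii operator $F_R:H\to H$ is globally Lipschitz. Combined with the globally Lipschitz structure of $G$, a contraction argument for the mild formulation in $L^p(\Omega;C([0,T];H))$ yields a unique $\mathcal{W}_t$-adapted mild solution $X_R$. Introducing the stopping time $\tau_R=\inf\{t:\|X_R(t)\|_V\ge R\}$ and using the uniform a priori bound (derived below) to show $\tau_R\to T$ almost surely as $R\to\infty$ produces the unique global mild solution $X$ satisfying \eqref{eq2.2.11}; uniqueness in the limit is inherited from the monotonicity $(F(u)-F(v),u-v)\le C_F\|u-v\|^2$ and the Lipschitz bound on $G$ via Grönwall.

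For the moment bound \eqref{eq2.2.12} I would apply Itô's formula to $\|X_n(t)\|_1^{2p}$ on a spectral Galerkin projection $X_n$ onto $\mathrm{span}(\varphi_1,\dots,\varphi_n)$, where everything is finite-dimensional and smooth. The key input is the integration-by-parts identity
$$(AX, F(X)) = \|X\|_1^2 - 3\int_{\mathcal{D}} X^2|\nabla X|^2\,dx \le \|X\|_1^2,$$
which tames the cubic term, while the Itô-correction trace is bounded by $\|A^{1/2}G(X)\|_{\mathcal{L}_2(H)}^2\le C\|X\|_1^2$ from Lemma \ref{l2.2.1}. Combining this with the dissipative contribution $-2p\|X\|_1^{2p-2}\|AX\|^2$ coming from the $-AX$ drift, and controlling the stochastic integral via the BDG and Young inequalities (absorbing the resulting $\|AX\|^2$ term into the dissipation), Grönwall delivers
$$\mathbb{E}\sup_{0\le s\le t}\|X_n(s)\|_1^{2p}\le C(T,p)(1+\|\xi\|_1^{2p})$$
uniformly in $n$. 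Lower semicontinuity of $\|\cdot\|_1$ under the $H$-valued convergence $X_n\to X$ then transfers the bound to the mild solution, yielding \eqref{eq2.2.12}.

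The main obstacle is the rigorous use of Itô's formula in $\dot{H}^1$ for the mild solution, since the mild solution is not a priori a strong or variational solution. Bypassing this through the finite-dimensional Galerkin scheme, where Itô applies classically and all Hilbert--Schmidt traces are finite sums, and then transferring the uniform-in-$n$ bound by lower semicontinuity, is the cleanest route. Some additional care is required to verify that the Galerkin approximations actually converge to the mild solution; this follows from the uniform moment bound together with the monotonicity of $F$ and the Lipschitz structure of $G$ via a routine Grönwall argument applied to pairwise differences.
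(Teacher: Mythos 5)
Your proposal is correct in outline but takes a genuinely different route from the paper. The paper's proof is much shorter and does not address existence/uniqueness at all: it formally applies $A^{\frac12}$ to the mild equation, rewrites the resulting equation for $Y=A^{\frac12}X$ as
\begin{equation*}
dY+AY\,dt=DF(X)Y\,dt+DG(X)Y\,dW(t),
\end{equation*}
and then applies the mild It\^o formula of \cite{DaPrato2019} to $\rho(Y)=\|Y\|^{2p}$, using the one-sided bound $(DF(u)v,v)\le C_F\|v\|^2$ from \eqref{eq2.2.5} and $\|DG(u)v\|_{\mathcal{L}_2(H)}\le C\|v\|$ from Lemma~\ref{l2.2.1}, followed by Gr\"onwall. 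You instead work on a spectral Galerkin approximation, apply the classical finite-dimensional It\^o formula to $\|X_n\|_1^{2p}$, and tame the cubic term by the explicit integration by parts $(AX,F(X))=\|X\|_1^2-3\int_{\mathcal D}X^2|\nabla X|^2\,dx\le\|X\|_1^2$, transferring the uniform bound to the limit by lower semicontinuity. The two approaches buy different things. The paper's route is shorter, but it rests on the commutation $A^{\frac12}F(X)=DF(X)A^{\frac12}X$ (and likewise for $G$), i.e.\ a chain rule for the nonlocal operator $A^{\frac12}$, which is a delicate step; your integration-by-parts identity is the genuinely local computation that makes the sign of the cubic contribution transparent, and the Galerkin setting sidesteps any question about the applicability of an infinite-dimensional It\^o formula to a mild solution. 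Your proposal also supplies an existence/uniqueness argument (truncation, stopping times, monotonicity) that the paper omits. The remaining work in your route is standard but nontrivial: constructing $f_R$ so that $f_R'\le 1$ uniformly in $R$ (so the energy estimate is $R$-independent), verifying $V$-valued continuity of $X_R$ so the stopping times are well defined (which holds in the one-dimensional setting here), and carrying out the Galerkin convergence via monotonicity of $F$ and the Lipschitz structure of $G$, as you indicate. Note also that the pointwise-in-$t$ bound \eqref{eq2.2.12} does not require the BDG step; taking expectations kills the martingale directly, so your use of BDG yields a slightly stronger $\mathbb{E}\sup_{t}$ estimate than the statement demands.
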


\begin{proof} 
	Applying $A^{\frac{1}{2}}$ on both sides of \eqref{eq2.1.1}, we obtain
	\begin{equation*}
		\begin{aligned}
			A^{\frac{1}{2}}X(t) =&\ S(t)A^{\frac{1}{2}}\xi + \int_0^tS(t-s)DF(X(s))A^{\frac{1}{2}}X(s)ds + \int_0^tS(t-s)DG(X(s))A^{\frac{1}{2}}X(s)dW(s).
		\end{aligned}
	\end{equation*}
	Let $Y(t) = A^{\frac{1}{2}}X(t)$, then it satisfies $Y(0) = A^{\frac{1}{2}}\xi$ and
	\begin{equation*}
		dY(t)+AY(t)dt = DF(X(t))Y(t)dt + DG(X(t))Y(t)dW(t).
	\end{equation*}
	For any integer $p \ge 1$, define a functional on $H$ by $\rho(u) = \|u\|^{2p}$. Applying the mild It\^o formula for SPDEs \cite[Theorem 1]{DaPrato2019} to  $\rho(Y(t))$, we have
	\begin{equation}\label{eq2.2.21}
		\begin{aligned}
			\rho(Y(t)) =&\ \rho(Y(0)) + \int_0^t\big(D\rho(Y(s)),-AY(s)+DF(X(s))Y(s)\big)ds\\
			&\ + \int_0^t\big(D\rho(Y(s)),DG(X(s))Y(s)dW(s)\big)\\
			&\ + \frac{1}{2}\int_0^t\!\mathrm{Tr}\{D^2\rho(Y(s))(DG(X(s))Y(s))(DG(X(s))Y(s))^*\}ds.
		\end{aligned}
	\end{equation}
	Here, $(D\rho(Y(s),u) = 2p\|Y(s)\|^{2p-2}(Y(s),u)$ for any $u \in H$, cf.\ \cite[(3.22)]{Gawarecki2011}, therefore we have $(D\rho(Y(s)),-AY(s))\le0$. Moreover
	\begin{equation}\label{eq2.2.22}
		\|D^2\rho(u)\|_{\mathcal{L}(H)} = \|D^2\rho(u)\|_{\mathcal{L}(H\times H,\mathbb{R})} \le 2p(2p-1)\|u\|^{2(p-1)}.
	\end{equation}
	Taking expectation on both sides of \eqref{eq2.2.21}, by \eqref{eq2.2.5}, Lemma \ref{l2.2.1} and \eqref{eq2.2.22}, we obtain
	\begin{equation*}
		\begin{aligned}
			&\ \mathbb{E}\rho(Y(t))\\ 
			\le&\ \mathbb{E}\rho(A^{\frac{1}{2}}\xi) + \mathbb{E}\int_0^t2p\|Y(s)\|^{2p-2}(Y(s),DF(X(s))Y(s))ds\\
			&\ + \frac{1}{2}\mathbb{E}\int_0^t2p(2p-1)\|Y(s)\|^{2(p-1)}\|DG(X(s))Y(s)\|^2_{\mathcal{L}_2(H)}ds\\
			\le&\ \mathbb{E}\rho(A^{\frac{1}{2}}\xi) + \mathbb{E}\int_0^t2p\|Y(s)\|^{2p-2}C_F\|Y(s)\|^2ds\\
			&\ + \frac{1}{2}\mathbb{E}\int_0^t2p(2p-1)\|Y(s)\|^{2(p-1)}C\|Y(s)\|^2ds\\
			\le&\ \mathbb{E}\rho(A^{\frac{1}{2}}\xi) + C\mathbb{E}\int_0^t\|Y(s)\|^{2p}ds.
		\end{aligned}
	\end{equation*}
	From Gronwall inequality, it follows that for any $t \in [0,T]$
	\begin{equation*}
		\mathbb{E}\|X(t)\|_1^{2p} \le C\|\xi\|_1^{2p}.
	\end{equation*}
	This completes the proof.
\end{proof}

By the Sobolev embedding property $\dot{H}^1(\mathcal{D}) \hookrightarrow \mathcal{C}(\bar{\mathcal{D}})$, we have
\begin{equation}\label{eq2.2.11.1}
	\mathbb{E}\| X(t)\|_{V}^p \le C(1+\| \xi \|^p_{1}),\quad \forall t \in [0,T].
\end{equation}

\subsection{Malliavin calculus}
In order to proceed weak convergence analysis, we recall
some properties on Malliavin calculus, cf. \cite[Section 2.4]{Andersson2016} and \cite[Section 2]{Nualart1998}. For any $h \in$ $L^{2}((0,T),H)$, define $W(h) = \int_0^T( h(t),dW(t))$, which is a centered isonormal Gaussian process on $H$, i.e.\
\begin{equation*}
	\mathbb{E}[W(h)W(g)]=(h, g)_{L^{2}((0,T), H)}, \quad \forall h,g \in L^{2}((0,T), H).
\end{equation*}

For any integer $n \ge 1$, let $\mathcal{C}_{p}^{\infty}(\mathbb{R}^{n})$ denote the space of all real-valued $\mathcal{C}^{\infty}$-functions defined on $\mathbb{R}^{n}$ with polynomial growth. We define a family of smooth cylindrical random variables with values in $\mathbb{R}$ by
\begin{equation*}
	\begin{aligned}
		\mathcal{S}=&\ \{X=f(W(h_{1}), \cdots, W(h_{n})):  f \in \mathcal{C}_{p}^{\infty}(\mathbb{R}^{n}),\\
		&\qquad h_{1}, \cdots, h_{n} \in L^{2}((0,T), H),\ n=1,2,\cdots\},
	\end{aligned}
\end{equation*}
and a family of counterparts with values in $H$ by
\begin{equation*}
	\mathcal{S}(H)=\{F=\sum_{i=1}^{m} X_{i} \varphi_{i}: X_{i} \in \mathcal{S}, m=1,2,\cdots\},
\end{equation*}
where $\varphi_i\in H$ is defined in previous section.

For any $X \in \mathcal{S}$, define its Malliavin derivative by 
\begin{equation*}
	\mathcal{D} X=\sum_{i=1}^{n} \partial_{i} f(W(h_{1}),\cdots, W(h_{n})) h_{i},
\end{equation*}
which is an $L^{2}((0,T), H)$-valued random variable. Equivalently, $(\mathcal{D}_t X)_{t\in(0,T)}$ is an $H$-valued stochastic process. Furthermore, for any $F \in \mathcal{S}(H)$, its Malliavin derivative is given by
\begin{equation*}
	\mathcal{D} F=\sum_{i=1}^{m} \sum_{j=1}^{n} \partial_{j} f_{i}(W(h_{1}), \cdots, W(h_{n}))(\varphi_{i} \otimes h_{j}),
\end{equation*}
where the tensor product $\varphi_{i} \otimes h_{j}$ denotes a bounded bilinear map from $\!H \times L^{2}((0,T), H)\!$ to $\mathbb{R}$ defined by
\begin{equation*}
	(\varphi_{i} \otimes h_{j})(u,g) = ( \varphi_{i},u ) ( h_{j},g)_{L^{2}((0,T), H)},\quad \forall (u,g) \in H \times L^{2}((0,T), H).
\end{equation*}
Noticing  
\begin{equation*}
	H \otimes L^{2}((0,T), H) \simeq \mathcal{L}_2(L^{2}((0,T), H),H) \simeq L^2((0,T),\mathcal{L}_2(H)),
\end{equation*}
then $(\mathcal{D}_{t}F)_{t \in(0,T)}$ can be regarded as an $\mathcal{L}_2(H)$-valued stochastic process.
By $\mathcal{D}_{t}^{u} F$ we denote the Malliavin derivative of $F$ in direction $u \in H$ at time $t$, i.e.
\begin{equation*}
	\mathcal{D}_{t}^{u}F = \mathcal{D}_{t}Fu=\sum_{i=1}^{m} \sum_{j=1}^{n} \partial_{j} f_{i}(W(h_{1}), \cdots, W(h_{n})) ( u, h_{j}(t)) \varphi_{i}.
\end{equation*}

We define a Watanabe-Sobolev space $\mathbb{D}^{1,2}(H)$ as the closure of $\mathcal{S}(H)$ with respect to the norm
\begin{equation*}
	\|F\|^2_{\mathbb{D}^{1,2}(H)}=\mathbb{E}\|F\|^{2}+\mathbb{E}\int_{0}^{T}\|\mathcal{D}_{t} F\|_{\mathcal{L}_2(H)}^{2} dt.
\end{equation*}
The following lemma gives the integration by parts formula in Malliavin sense, which plays an important role in weak error analysis.

\begin{lemma}\label{l2.2.4}\cite[Lemma 2.2]{Andersson2016}
	For any given random variable $F \in \mathbb{D}^{1,2}(H)$ and any $\Phi \in L^{2}((0,T) \times \Omega, \mathcal{L}_2(H))$ adapted to $\mathcal{W}_t$, there holds the integration by parts formula
	\begin{equation*}
		\mathbb{E}( F,\int_0^T\Phi(t)dW(t) ) = \mathbb{E}\int_0^T( \mathcal{D}_tF,\Phi(t) )_{\mathcal{L}_2(H)}dt.
	\end{equation*}
\end{lemma}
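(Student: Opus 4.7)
The plan is to invoke the classical Malliavin--Skorohod duality, specialized to adapted integrands where the Skorohod integral coincides with the Itô integral, and then extend by density from the smooth cylindrical classes $\mathcal{S}$, $\mathcal{S}(H)$ already defined in the excerpt.

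First, I would establish the identity on a dense core. Take $F \in \mathcal{S}(H)$ of the form $F = \sum_{i=1}^{m} f_i(W(h_1),\dots,W(h_n))\varphi_i$ and take $\Phi$ to be an elementary adapted $\mathcal{L}_2(H)$-valued process of the form $\Phi(t) = \sum_{k=0}^{N-1}\mathbf{1}_{(t_k,t_{k+1}]}(t)\,\Phi_k$, where the partition $0=t_0<\dots<t_N=T$ can be chosen to refine the supports of the deterministic $h_j$'s, and each $\Phi_k$ is $\mathcal{W}_{t_k}$-measurable and smooth. The right-hand side $\mathbb{E}\int_0^T(\mathcal{D}_tF,\Phi(t))_{\mathcal{L}_2(H)}\,dt$ reduces, after using the definition of $\mathcal{D}_tF$, to a sum of expectations involving partial derivatives of the $f_i$ evaluated at Gaussian vectors paired against $(\Phi_k\varphi_i, h_j(t))$. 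On the other hand, $\int_0^T\Phi(t)\,dW(t) = \sum_k \Phi_k(W(t_{k+1})-W(t_k))$, and applying the finite-dimensional Gaussian integration-by-parts formula (together with the $\mathcal{W}_{t_k}$-measurability of $\Phi_k$, which lets the expectation be conditioned on the increments strictly before $t_k$) produces exactly the same expression. This is essentially the standard computation, carried out component-wise in the orthonormal basis $\{\varphi_i\}$.

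Second, I would extend in $\Phi$: elementary adapted processes are dense in the space $L^2_{\mathrm{ad}}((0,T)\times\Omega,\mathcal{L}_2(H))$ of $\mathcal{W}_t$-adapted square-integrable processes, and for fixed $F\in\mathcal{S}(H)$ both sides of the identity are continuous in $\Phi$: the left-hand side by Cauchy--Schwarz together with the Itô isometry $\mathbb{E}\|\int_0^T\Phi\,dW\|^2 = \mathbb{E}\int_0^T\|\Phi(t)\|_{\mathcal{L}_2(H)}^2 dt$, and the right-hand side directly by Cauchy--Schwarz in $L^2((0,T)\times\Omega)$. Passing to the limit yields the identity for all adapted $\Phi\in L^2((0,T)\times\Omega,\mathcal{L}_2(H))$ and all $F\in\mathcal{S}(H)$.

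Third, I would extend in $F$ using the very definition of $\mathbb{D}^{1,2}(H)$ as the closure of $\mathcal{S}(H)$ under $\|\cdot\|_{\mathbb{D}^{1,2}(H)}$. Given $F\in\mathbb{D}^{1,2}(H)$, pick $F_n\in\mathcal{S}(H)$ with $F_n\to F$ in $L^2(\Omega,H)$ and $\mathcal{D}F_n\to\mathcal{D}F$ in $L^2((0,T)\times\Omega,\mathcal{L}_2(H))$; then the left-hand side passes to the limit by Cauchy--Schwarz against $\int_0^T\Phi\,dW\in L^2(\Omega,H)$, and the right-hand side passes to the limit by Cauchy--Schwarz against $\Phi$. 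The main obstacle, aside from routine bookkeeping, is the Hilbert-space-valued setup: one must carefully identify $H\otimes L^2((0,T),H)$ with $L^2((0,T),\mathcal{L}_2(H))$ (as noted in the excerpt) so that the pairing $(\mathcal{D}_tF,\Phi(t))_{\mathcal{L}_2(H)}$ makes sense and matches the finite-dimensional computation on the core. Once that identification is fixed, the three-step density argument goes through cleanly.
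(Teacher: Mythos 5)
Your argument is correct. The paper itself offers no proof of this lemma --- it is quoted directly from Andersson, Kruse and Larsson \cite[Lemma 2.2]{Andersson2016} --- and your three-step scheme (verify the identity by finite-dimensional Gaussian integration by parts for $F\in\mathcal{S}(H)$ against elementary adapted $\Phi$, where the $\mathcal{W}_{t_k}$-measurability of each $\Phi_k$ kills the extra term $\mathbb{E}\int(F,\mathcal{D}_t\Phi_k\,\cdot)\,dt$ that would otherwise turn the It\^o integral into a Skorohod integral; then close up in $\Phi$ via the It\^o isometry and in $F$ via the definition of $\mathbb{D}^{1,2}(H)$ as the closure of $\mathcal{S}(H)$) is precisely the standard proof of the Malliavin duality formula restricted to adapted integrands, i.e.\ essentially the argument underlying the cited result. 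The one point worth making explicit in a write-up is the identification $H\otimes L^{2}((0,T),H)\simeq L^{2}((0,T),\mathcal{L}_2(H))$ so that the pairing $(\mathcal{D}_tF,\Phi(t))_{\mathcal{L}_2(H)}=\mathrm{Tr}(\Phi(t)^{*}\mathcal{D}_tF)$ coincides with the componentwise sums produced by the core computation; you already flag this, so no gap remains.
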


\section{Auxiliary equation and regularity estimates}
\label{Sect3}

In this section, we first construct a temporal splitting-up approximation to \eqref{eq2.1.1}, which might be regarded as the exponential Euler method applied to an auxiliary SPDE \cite{Higham2002,Brehier2019,CuiJianbo2019}. Then, we investigate the strong convergence order in time between the exact and auxiliary solutions. Finally, we provide a priori estimates for the corresponding Kolmogorov equation.

\subsection{Construction of auxiliary equation and strong convergence rate}
According to the splitting-up strategy, we first decompose \eqref{eq2.1.1} into an abstract ODE and an SPDE
\begin{align}
	dX(t) =&\ F(X(t))dt,  \label{eq2.4.1}\\
	dX(t)+AX(t)dt =&\ G(X(t))dW(t).   \label{eq2.4.2} 
\end{align}
Let $\Phi_{t}$ with $\Phi_0 = I$ be the solution operator of \eqref{eq2.4.1}, then $X(t)=\Phi_{t}(\xi)$ is a solution satisfiying initial value condition $X(0) = \xi$ of \eqref{eq2.4.1}. According to \cite{Brehier2018}, we have an explicit formula
\begin{equation}\label{eq2.4.3}
	X(t) = \Phi_{t}(\xi) = \frac{\xi}{\sqrt{\xi^2+(1-\xi^2)e^{-2t}}}, \quad t\ge 0.
\end{equation}
The following lemma gives the differentiablility of the phase flow of \eqref{eq2.4.1} with respect to initial value.

\begin{lemma}\label{l2.4.1}\cite[Lemma 4.1]{CuiJianbo2019}
	Assume {\bf(A1)}. Then the phase flow $\Phi_t$ satisfies for all $x \in \mathbb{R}$
	\begin{equation*}\label{eq2.4.4}
		|\Phi_t(x)|\le e^{t}|x|,\quad|D\Phi_t(x)| \le e^{t},\quad |D^2\Phi_t(x)|\le e^{Ct}|x|,
	\end{equation*}
	where $C>0$ is a constant.
\end{lemma}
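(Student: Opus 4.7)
The plan is to exploit the explicit closed-form representation of the phase flow given in \eqref{eq2.4.3} and reduce the three estimates to elementary calculus. First I would rewrite \eqref{eq2.4.3} in the more transparent form
\begin{equation*}
\Phi_t(x) = \frac{x\,e^t}{\sqrt{1+(e^{2t}-1)x^2}},
\end{equation*}
obtained by multiplying numerator and denominator of \eqref{eq2.4.3} by $e^t$. Note that the radicand $1+(e^{2t}-1)x^2$ is bounded below by $1$ for every $x\in\mathbb{R}$ and every $t\ge 0$, so the first estimate $|\Phi_t(x)|\le e^t|x|$ is immediate.

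For the second bound, I would differentiate once in $x$. Setting $a(t):=e^{2t}-1\ge 0$ and using the quotient rule gives, after simplification,
\begin{equation*}
D\Phi_t(x) = \frac{e^t}{\bigl(1+a(t)x^2\bigr)^{3/2}},
\end{equation*}
from which $|D\Phi_t(x)|\le e^t$ follows by the same observation that the denominator is at least $1$. A further differentiation yields
\begin{equation*}
D^2\Phi_t(x) = -\frac{3\,a(t)\,e^t\,x}{\bigl(1+a(t)x^2\bigr)^{5/2}},
\end{equation*}
and therefore $|D^2\Phi_t(x)|\le 3(e^{2t}-1)e^t|x|$.

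The remaining step is to package this last bound into the form $e^{Ct}|x|$. Using the elementary inequality $e^{2t}-1\le 2t\,e^{2t}$ (mean value theorem) I would estimate $3(e^{2t}-1)e^t\le 6t\,e^{3t}$, and then invoke $6t\le e^{3t}$ for all $t\ge 0$ (which is verified by observing that $1+3t+\tfrac{9}{2}t^2-6t=1-3t+\tfrac{9}{2}t^2$ has negative discriminant and hence is strictly positive) to conclude $|D^2\Phi_t(x)|\le e^{6t}|x|$, so $C=6$ is admissible.

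The only step requiring any thought is this absorption of the polynomial prefactor $(e^{2t}-1)$ into a pure exponential; the remaining differentiations and algebraic simplifications are entirely mechanical. No stochastic, PDE, or semigroup machinery is needed, since \eqref{eq2.4.1} is a scalar ODE whose solution is available in closed form.
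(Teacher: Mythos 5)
Your computation is correct: the rewriting of \eqref{eq2.4.3} as $\Phi_t(x)=xe^t(1+(e^{2t}-1)x^2)^{-1/2}$ is right, the two differentiations are right, and the absorption of $3(e^{2t}-1)e^t$ into $e^{6t}$ via $e^{2t}-1\le 2te^{2t}$ and $6t\le e^{3t}$ is sound. The paper itself gives no proof of this lemma, delegating it to \cite[Lemma 4.1]{CuiJianbo2019}; your argument is exactly the elementary route one would expect that reference to take, namely direct calculus on the explicit scalar flow, so it serves as a valid self-contained substitute for the citation.
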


For any given positive integer $N>0$, let $\tau = \frac{T}{N}$ be time step size and $t_k=k\tau$ $(k=0,1,\cdots,N)$ be the uniform partition of interval $[0,T]$. 
Let $\widetilde{X}_0 = \xi$ and assume $\widetilde{X}_k$ $(k=0,1,\cdots,N)$ is an approximation to \eqref{eq2.1.1} at $t_k$.
In terms of the splitting-up strategy, we will define next approximation at $t_{k+1}$ by first solving \eqref{eq2.4.1} to get $X_{k+1}^* =\ \Phi_\tau(\widetilde{X}_{k})$, and then applying an exponential Euler method to \eqref{eq2.4.2} to compute
\begin{equation}\label{eq2.4.5}
	\widetilde{X}_{k+1} =\ S_{\tau}^{expo}X_{k+1}^*+S_{\tau}^{expo}G(X_{k+1}^*)\Delta W_k,
\end{equation}
where the linear operator $S_{\tau}^{expo} = e^{\tau A}$, $\Delta W_k = W((k+1)\tau)-W(k\tau)$.

Define two auxiliary maps
\begin{equation}\label{eq2.4.6}
	F_{\tau}(\xi) :=
	\frac{\Phi_{\tau}(\xi)-\xi}{\tau},
	\quad
	G_{\tau}(\xi) :=
	G(\Phi_{\tau}(\xi)),\quad \forall \xi \in H.
\end{equation}
Then, the splitting-up approximation in \eqref{eq2.4.5} can be written as
\begin{equation}\label{eq2.4.7}
	\widetilde{X}_{k+1} = S_{\tau}^{expo}\widetilde{X}_k+S_{\tau}^{expo}F_{\tau}(\widetilde{X}_k)\tau + S_{\tau}^{expo}G_{\tau}(\widetilde{X}_k)\Delta W_k.
\end{equation}
Obviously, $\widetilde{X}_{k}$ $(k=0,1,\cdots,N)$ can be regarded as exponential Euler approximation to the solution $X_{\tau}(t)$ at $t=t_k$ of the following auxiliary equation
\begin{equation}\label{eq2.4.8}
	\begin{aligned}
		dX_{\tau}(t)+AX_{\tau}(t)dt =&\ F_{\tau}(X_{\tau}(t))dt+G_{\tau}(X_{\tau}(t))dW(t),\\
		X_{\tau}(0) =&\ \xi.
	\end{aligned}
\end{equation}

The following lemma collect some properties on $F_{\tau}(u)$ together with its Fr\'echet derivatives.

\begin{lemma}\label{l2.4.2.1}\cite[Lemma 4.2]{CuiJianbo2019}
	For any $\tau_0 \in (0,1)$, there exists a positive constant $C=C(\tau_0)$, such that for all $\tau \in (0,\tau_0)$, $x\in \mathbb{R}$, there hold
	\begin{equation*}
		\begin{aligned}
			&|F_{\tau}(x)| \le C(1+|x|^3),\\
			&DF_{\tau}(x) \le e^{C\tau_0},\quad |DF_{\tau}(x)| \le C(1+|x|^2),\\
			&|D^2F_{\tau}(x)| \le C(1+|x|^3),\\
			&|F_{\tau}(x)-F(x)| \le C\tau(1+|x|^5). 
		\end{aligned}
	\end{equation*}
\end{lemma}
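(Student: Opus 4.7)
The plan is to exploit the fact that $\Phi_s(x)$ is the solution of the scalar ODE $\dot\Phi_s = F(\Phi_s) = \Phi_s - \Phi_s^3$ with $\Phi_0(x) = x$. The fundamental theorem of calculus then gives the integral representation
\begin{equation*}
F_\tau(x) = \frac{\Phi_\tau(x)-x}{\tau} = \frac{1}{\tau}\int_0^\tau F(\Phi_s(x))\,ds,
\end{equation*}
and differentiating the identity $\Phi_\tau(x) - x = \tau F_\tau(x)$ yields the compact expressions
\begin{equation*}
DF_\tau(x) = \frac{D\Phi_\tau(x)-1}{\tau}, \qquad D^2F_\tau(x) = \frac{D^2\Phi_\tau(x)}{\tau}.
\end{equation*}
All four bounds will be reduced to estimates on $\Phi_s$, $D\Phi_s$, $D^2\Phi_s$ combined with the explicit form $F(u) = u - u^3$.

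For the first estimate, $|F(u)| \le |u|+|u|^3$ and Lemma~\ref{l2.4.1} give $|F(\Phi_s(x))| \le C(1+|x|^3)$ uniformly for $s\in[0,\tau_0]$, and averaging yields $|F_\tau(x)| \le C(1+|x|^3)$. The one-sided bound on $DF_\tau$ uses the linear variation equation, whose solution is $D\Phi_s = \exp\!\bigl(\int_0^s(1-3\Phi_u^2)du\bigr)$; since $1-3\Phi_u^2 \le 1$ we have $D\Phi_\tau \le e^{\tau}$, whence
\begin{equation*}
DF_\tau(x) = \frac{D\Phi_\tau(x)-1}{\tau} \le \frac{e^\tau-1}{\tau} \le e^{\tau_0}.
\end{equation*}
The absolute-value bound on $DF_\tau$ follows by differentiating under the integral and using $|DF(\Phi_s)| = |1-3\Phi_s^2| \le C(1+|x|^2)$ together with $|D\Phi_s| \le e^{\tau_0}$. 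For the last estimate, write $F_\tau(x) - F(x) = \tau^{-1}\int_0^\tau (F(\Phi_s(x))-F(x))\,ds$ and apply the chain rule together with the ODE: $F(\Phi_s(x)) - F(x) = \int_0^s DF(\Phi_u)F(\Phi_u)\,du$; the integrand is bounded by $C(1+|x|^2)\cdot C(1+|x|^3) \le C(1+|x|^5)$, yielding $|F_\tau(x)-F(x)| \le C\tau(1+|x|^5)$.

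The main obstacle is the estimate on $D^2F_\tau$. A direct invocation of Lemma~\ref{l2.4.1} gives only $|D^2\Phi_\tau| \le e^{C\tau_0}|x|$, which divided by $\tau$ blows up as $\tau\to 0$. The remedy is to exploit the initial condition $D^2\Phi_0 = 0$: differentiating the variation equation in $x$ yields
\begin{equation*}
\tfrac{d}{ds}D^2\Phi_s = DF(\Phi_s)\,D^2\Phi_s + D^2F(\Phi_s)(D\Phi_s)^2, \qquad D^2\Phi_0 = 0,
\end{equation*}
which, solved by variation of parameters, gives
\begin{equation*}
D^2\Phi_\tau = \int_0^\tau \exp\!\Bigl(\int_r^\tau DF(\Phi_u)\,du\Bigr)\,D^2F(\Phi_r)(D\Phi_r)^2\,dr.
\end{equation*}
The exponential factor is bounded by $e^{\tau_0}$, $|D^2F(\Phi_r)| = 6|\Phi_r| \le Ce^{\tau_0}|x|$, and $|D\Phi_r|^2 \le e^{2\tau_0}$, so the integrand is $O(|x|)$ and $|D^2\Phi_\tau| \le C\tau|x|$. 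This linear-in-$\tau$ cancellation is the essential point: it yields $|D^2F_\tau(x)| \le C|x| \le C(1+|x|^3)$ and completes the proof.
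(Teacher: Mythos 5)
Your argument is correct. Note, however, that the paper itself offers no proof of this lemma: it is quoted verbatim from \cite[Lemma 4.2]{CuiJianbo2019}, so there is nothing internal to compare against; what you have supplied is a self-contained derivation from the flow $\Phi_t$ of \eqref{eq2.4.3} and the definition \eqref{eq2.4.6}. All five bounds check out: the integral representation $F_\tau(x)=\tau^{-1}\int_0^\tau F(\Phi_s(x))\,ds$ together with $|\Phi_s(x)|\le e^{s}|x|$ from Lemma~\ref{l2.4.1} gives the cubic bound; the signed bound $DF_\tau(x)=(D\Phi_\tau(x)-1)/\tau\le(e^\tau-1)/\tau\le e^{\tau_0}$ follows from $DF(u)=1-3u^2\le 1$; the remaining first-derivative and difference bounds are routine. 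The one place where a naive appeal to Lemma~\ref{l2.4.1} genuinely fails is $D^2F_\tau=D^2\Phi_\tau/\tau$, and you correctly identify and repair this: the variation-of-parameters formula with initial datum $D^2\Phi_0=0$ extracts the factor $\tau$ from $D^2\Phi_\tau$, since the integrand $\exp\bigl(\int_r^\tau DF(\Phi_u)\,du\bigr)D^2F(\Phi_r)(D\Phi_r)^2$ is bounded by $6e^{4\tau_0}|x|$. This in fact yields the sharper estimate $|D^2F_\tau(x)|\le C|x|$, which implies the stated $C(1+|x|^3)$. The only cosmetic point worth adding is a sentence confirming that the flow \eqref{eq2.4.3} is defined for all $x\in\mathbb{R}$ and $t\in[0,\tau_0]$ (the radicand $x^2(1-e^{-2t})+e^{-2t}$ is bounded below by $e^{-2\tau_0}>0$), so that all the differentiations under the integral sign are legitimate.
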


From this lemma, it follows that for all $u \in V$ and $v,w \in H$
\begin{equation}\label{eq2.4.9}
	\begin{aligned}
		&\|F_{\tau}(u)\| \le C(1+\|u\|^3_V),\\ 
		&(DF_{\tau}(u)v,v) \le C\|v\|^2,\\ 
		&\|DF_{\tau}(u)v\| \le C(1+\|u\|^2_V)\|v\|,\\ 
		&\|D^2F_{\tau}(u)(v,w)\| \le C(1+\|u\|^3_V)\|v\|\|w\|,\\ 
		&\|F_{\tau}(u)-F(u)\| \le C\tau(1+\|u\|_V^5). 
	\end{aligned}
\end{equation}
We emphasize that $DF_{\tau}$ satisfies a one-sided Lipschitz continuity.

\begin{lemma}\label{l2.4.2}
	For any $\tau_0 \in (0,1)$, there exists a positive constant $C=C(\tau_0)$, such that for all $\tau \in (0,\tau_0)$, $u \in V$ and $v,w \in H$, there hold
	\begin{equation*}
		\begin{aligned}
			\|G_{\tau}(u)\|_{\mathcal{L}(H)} &\le C(1+\|u\|_V),\\
			\|DG_{\tau}(u)v\|_{\mathcal{L}_2(H)} &\le C\|v\|,\\ 
			\|D^2G_{\tau}(u)(v,w)\|_{\mathcal{L}_2(H)} &\le C(1+\|u\|_V)\|v\|\|w\|,\\ 
			\|G_{\tau}(u)-G_{\tau}(v)\|_{\mathcal{L}_2(H)} &\le C\|u-v\|,\\
			\|G_{\tau}(u)-G(u)\|_{\mathcal{L}_2(H)} &\le C\tau(1+\|u\|^3_V). 
		\end{aligned}
	\end{equation*}
\end{lemma}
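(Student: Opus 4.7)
The plan is to reduce everything to the chain rule $G_\tau = G \circ \Phi_\tau$, then combine the scalar bounds on $\Phi_\tau$ from Lemma \ref{l2.4.1} with the operator bounds on $G$ from Lemma \ref{l2.2.1}. Since $\Phi_\tau$ is a Nemytskii operator acting pointwise via the ODE flow \eqref{eq2.4.3}, Lemma \ref{l2.4.1} immediately yields $\|\Phi_\tau(u)\|_V \le e^{\tau}\|u\|_V$, $\|D\Phi_\tau(u)v\| \le e^{\tau}\|v\|$, and $\|D^2\Phi_\tau(u)(v,w)\| \le e^{C\tau}\|u\|_V\|v\|\|w\|$ for $u \in V$ and $v,w \in H$, once we interpret the pointwise inequalities correctly in the $L^p$ sense.

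For the first bound, since $H \hookrightarrow \dot{H}^{-\frac{1}{2}-\delta}$, I would apply Lemma \ref{l2.2.1} to $\Phi_\tau(u) \in H$ and use $\|\Phi_\tau(u)\| \le C\|\Phi_\tau(u)\|_V \le Ce^\tau\|u\|_V$ to obtain $\|G_\tau(u)\|_{\mathcal{L}(H)} \le C(1+\|u\|_V)$. For the second, the chain rule gives $DG_\tau(u)v = DG(\Phi_\tau(u))(D\Phi_\tau(u)v)$, and applying the $\mathcal{L}_2(H)$ estimate of Lemma \ref{l2.2.1} together with $\|D\Phi_\tau(u)v\| \le e^\tau\|v\|$ yields the desired bound. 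For the third, I would differentiate once more to get
\begin{equation*}
D^2G_\tau(u)(v,w) = D^2G(\Phi_\tau(u))(D\Phi_\tau(u)v,D\Phi_\tau(u)w) + DG(\Phi_\tau(u))(D^2\Phi_\tau(u)(v,w)),
\end{equation*}
whose two pieces are controlled respectively by $C\|v\|\|w\|$ and by $C\|D^2\Phi_\tau(u)(v,w)\| \le Ce^{C\tau}\|u\|_V\|v\|\|w\|$, so the sum is $C(1+\|u\|_V)\|v\|\|w\|$.

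For the Lipschitz estimate, I would write $G_\tau(u)-G_\tau(v) = \int_0^1 DG_\tau(v+\theta(u-v))(u-v)\,d\theta$ and invoke the bound in (ii) to obtain $\|G_\tau(u)-G_\tau(v)\|_{\mathcal{L}_2(H)} \le C\|u-v\|$. The only mildly subtle point is the last estimate, where the factor $\tau$ must be extracted. Here I would use the splitting identity $\Phi_\tau(u) - u = \tau F_\tau(u)$ coming from \eqref{eq2.4.6}, together with the cubic bound $\|F_\tau(u)\| \le C(1+\|u\|_V^3)$ from \eqref{eq2.4.9}. Writing
\begin{equation*}
G_\tau(u) - G(u) = G(\Phi_\tau(u))-G(u) = \int_0^1 DG(u+\theta(\Phi_\tau(u)-u))(\Phi_\tau(u)-u)\,d\theta
\end{equation*}
and applying Lemma \ref{l2.2.1} to the integrand, I obtain $\|G_\tau(u)-G(u)\|_{\mathcal{L}_2(H)} \le C\|\Phi_\tau(u)-u\| \le C\tau(1+\|u\|_V^3)$.

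The main potential obstacle is the third bound, where one must avoid losing the factor $\|u\|_V$ in the first chain-rule term: Lemma \ref{l2.2.1} on $D^2G$ gives $C\|v\|\|w\|$ uniformly in the base point, so no $\|u\|_V$-factor appears from that piece, and the full $\|u\|_V$ dependence is produced purely by $D^2\Phi_\tau$. Otherwise, every step is a direct concatenation of Lemma \ref{l2.2.1} with Lemma \ref{l2.4.1}, so I do not anticipate further difficulties.
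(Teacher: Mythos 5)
Your proposal is correct and follows essentially the same route as the paper: the chain rule for $G_\tau = G\circ\Phi_\tau$, the pointwise flow bounds of Lemma \ref{l2.4.1} combined with the operator bounds of Lemma \ref{l2.2.1}, and the identity $\Phi_\tau(u)-u=\tau F_\tau(u)$ for the final estimate. The only cosmetic difference is that you apply the mean value theorem directly to $G_\tau$ for the Lipschitz bound, whereas the paper applies it to $G$ along the segment joining $\Phi_\tau(u)$ and $\Phi_\tau(v)$; the two are equivalent.
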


\begin{proof}
	For any $u \in V$, we have $\Phi_{\tau}(u)\in H$. Then, by \eqref{eq2.4.6}, \eqref{eq2.2.1.2} and Lemma \ref{l2.4.1}, there holds
	\begin{equation*}
		\|G_{\tau}(u)\|_{\mathcal{L}(H)} = \|G(\Phi_{\tau}(u))\|_{\mathcal{L}(H)} \le C(1+\|\Phi_{\tau}(u)\|) \le C(\tau_0)(1+\|u\|_V).
	\end{equation*}
	Similarly, for any $v,w \in H$
	\begin{equation*}
		\begin{aligned}
			&\ \|DG_{\tau}(u)v\|_{\mathcal{L}_2(H)} = \|D(G(\Phi_\tau(u)))v\|_{\mathcal{L}_2(H)}
			= \|DG(\Phi_\tau(u))\big(D\Phi_\tau(u)v\big)\|_{\mathcal{L}_2(H)}\\ 
			\le&\ \|DG(\Phi_\tau(u))\|_{\mathcal{L}_2(H)}\|D\Phi_\tau(u)v\| \le Ce^{C\tau_0}\|v\|.
		\end{aligned}
	\end{equation*}
	Furthermore,
	\begin{equation*}
		\begin{aligned}
			&\ \|D^2G_{\tau}(u)(v,w)\|_{\mathcal{L}_2(H)} = \|D^2(G(\Phi_{\tau}(u)))(v,w)\|_{\mathcal{L}_2(H)}\\ 
			\le&\ \|DG(\Phi_{\tau}(u))\big(D^2\Phi_{\tau}(u)(v,w)\big)\|_{\mathcal{L}_2(H)} + \|D^2G(\Phi_{\tau}(u))\big(D\Phi_{\tau}(u)v,D\Phi_{\tau}(u)w\big)\|_{\mathcal{L}_2(H)}\\
			\le&\ C\|D^2\Phi_{\tau}(u)(v,w)\| + C\|D\Phi_{\tau}(u)v\|\|D\Phi_{\tau}(u)w\| \\
			\le&\ C(\tau_0)(1+\|u\|_V) \|v\|\|w\|.
		\end{aligned}
	\end{equation*}
	By \eqref{eq2.2.1}, \eqref{eq2.4.6}, the mean value theorem and Lemma \ref{l2.2.1}, we have
	\begin{equation*}
		\begin{aligned}
			&\ \|G_{\tau}(u)-G_{\tau}(v)\|_{\mathcal{L}_2(H)}
			= \|G(\Phi_{\tau}(u))-G(\Phi_{\tau}(v))\|_{\mathcal{L}_2(H)}\\
			=&\ \|\int_0^1DG(\theta\Phi_{\tau}(u)+(1-\theta)\Phi_{\tau}(v))d\theta(\Phi_{\tau}(u)-\Phi_{\tau}(v))\|_{\mathcal{L}_2(H)}\\
			\le&\ \|\int_0^1DG(\theta\Phi_{\tau}(u)+(1-\theta)\Phi_{\tau}(v))d\theta\|_{\mathcal{L}_2(H)}\|\Phi_{\tau}(u)-\Phi_{\tau}(v)\|\\
			\le&\ \int_0^1\|DG(\theta\Phi_{\tau}(u)+(1-\theta)\Phi_{\tau}(v))\|_{\mathcal{L}_2(H)}d\theta\cdot C(\tau_0)\|u-v\|
			\le C\|u-v\|.
		\end{aligned}
	\end{equation*}
	Noticing \eqref{eq2.4.9} and in a similar way, we obtain
	\begin{equation*}
		\begin{aligned}
			\|G_{\tau}(u)-G(u)\|_{\mathcal{L}_2(H)}
			=&\ \|G(\Phi_{\tau}(u))-G(u)\|_{\mathcal{L}_2(H)}\\
			\le&\ C\|\Phi_{\tau}(u)-u\| = C\|\tau F_{\tau}(u)\|\le C\tau(1+\|u\|^3_V).
		\end{aligned}
	\end{equation*}
	This completes the proof.
\end{proof}

In \cite{Brehier2020}, it is pointed out that the coefficient functions $F_{\tau}(\cdot)$ and $G_{\tau}(\cdot)$ are globally Lipschitz continuous for any fixed $\tau \in (0,\tau_0)$. According to \cite[Theorem 3.3]{Gawarecki2011}, for any given $\tau\in(0,\tau_0)$, \eqref{eq2.4.8} possesses a unique mild solution $X_{\tau}(t)$. Moreover, by virtue of the uniform estimates for $F_{\tau}(\cdot)$ and $G_{\tau}(\cdot)$ with respect to $\tau\in(0,\tau_0)$ in \eqref{eq2.4.9} and Lemma \ref{l2.4.2}, there exists a constant $C = C(\xi,T,\tau_0,p)>0$ such that
\begin{equation}\label{eq2.4.16}
	\mathbb{E}\|X_{\tau}(t)\|_1^p \le C(1+\|\xi\|_1^p),\quad \forall t \in [0,T], p \ge 2.
\end{equation}
This together with Sobolev embedding property $\dot{H}^1(\mathcal{D}) \hookrightarrow \mathcal{C}(\bar{\mathcal{D}})$ leads to
\begin{equation}\label{eq2.4.17}
	\mathbb{E}\|X_{\tau}(t)\|_V^p \le C(1+\|\xi\|_1^p).
\end{equation}

The main result of this section is the strong convergence rate between $X_{\tau}(t)$ and $X(t)$.

\begin{theorem}\label{t2.4.1}
	Assume {\bf(A1)}-{\bf(A4)}. Then, for any $\tau_0 \in (0,1)$, the auxiliary solution $X_{\tau}(t)$ converges to the solution $X(t)$ of \eqref{eq2.1.1} with order of 1 as $\tau\to 0$, i.e.
	\begin{equation*}
		\mathbb{E}\|X_{\tau}(t)-X(t)\|^2 \le\ C \tau^2, \quad \forall t\in [0,T],\tau \in (0,\tau_0).
	\end{equation*}
	where $C = C(\xi,T,\tau_0)>0$ is a constant.
\end{theorem}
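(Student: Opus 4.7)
The plan is to derive a Gronwall-type $L^2(\Omega;H)$ estimate for the error process $e(t):=X_\tau(t)-X(t)$, which formally satisfies
\begin{equation*}
de(t)+Ae(t)\,dt=[F_\tau(X_\tau(t))-F(X(t))]\,dt+[G_\tau(X_\tau(t))-G(X(t))]\,dW(t),\qquad e(0)=0.
\end{equation*}
I would split each nonlinear increment into a \emph{stability} part (same coefficient function evaluated at $X_\tau$ and $X$) and a \emph{consistency} part (difference between auxiliary and exact coefficient at the same point $X$), writing
\begin{equation*}
F_\tau(X_\tau)-F(X)=[F_\tau(X_\tau)-F_\tau(X)]+[F_\tau(X)-F(X)],
\end{equation*}
and analogously for $G$. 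This decomposition is crucial: the stability part can be absorbed via the one-sided Lipschitz structure of $F_\tau$, while the consistency part is directly of order $\tau$ by Lemma~\ref{l2.4.2.1} and Lemma~\ref{l2.4.2}.

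Next I would apply It\^o's formula to $\|e(t)\|^2$. The dissipative contribution $-2(e,Ae)=-2\|A^{1/2}e\|^2\le 0$ is simply discarded. For the drift stability term, the mean value theorem together with the one-sided Lipschitz bound $(DF_\tau(u)v,v)\le C\|v\|^2$ from \eqref{eq2.4.9} gives
\begin{equation*}
2(e,F_\tau(X_\tau)-F_\tau(X))=2\int_0^1(e,DF_\tau(\theta X_\tau+(1-\theta)X)e)\,d\theta\le C\|e\|^2.
\end{equation*}
For the drift consistency term, Young's inequality with the bound $\|F_\tau(X)-F(X)\|\le C\tau(1+\|X\|_V^5)$ yields $2(e,F_\tau(X)-F(X))\le \|e\|^2+C\tau^2(1+\|X\|_V^{10})$. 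The It\^o correction term is split via $\|G_\tau(X_\tau)-G(X)\|_{\mathcal{L}_2(H)}^2\le 2\|G_\tau(X_\tau)-G_\tau(X)\|_{\mathcal{L}_2(H)}^2+2\|G_\tau(X)-G(X)\|_{\mathcal{L}_2(H)}^2$, whose summands are controlled by $C\|e\|^2$ (Lipschitz estimate from Lemma~\ref{l2.4.2}) and $C\tau^2(1+\|X\|_V^6)$, respectively. Taking expectation kills the stochastic integral and, after invoking the uniform moment bound \eqref{eq2.2.11.1}, produces
\begin{equation*}
\mathbb{E}\|e(t)\|^2\le C\int_0^t\mathbb{E}\|e(s)\|^2\,ds+C\tau^2,
\end{equation*}
so the claim follows from Gronwall's inequality.

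The main technical obstacle is the rigorous application of It\^o's formula to $\|e(t)\|^2$: since $X$ and $X_\tau$ are only mild solutions and $A$ is unbounded, $e(t)$ does not take values in $\mathrm{Dom}(A)$, so the classical It\^o formula is not directly available. I would bypass this either by invoking the mild It\^o formula of Da Prato--Jentzen--R\"ockner \cite{DaPrato2019} with $\rho(u)=\|u\|^2$ (exactly as in the proof of Theorem~\ref{t2.2.3}, where the dissipative term is kept non-positive), or by a spectral Galerkin/Yosida truncation, deriving the identity at the finite-dimensional level and passing to the limit using the uniform moment bounds \eqref{eq2.2.12} and \eqref{eq2.4.16}. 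Once this is justified, the one-sided Lipschitz structure of $F_\tau$ is precisely what allows the Gronwall argument to survive the lack of global Lipschitz continuity of the cubic nonlinearity $F$.
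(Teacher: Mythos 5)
Your proposal is correct and follows essentially the same route as the paper's own proof: the same stability/consistency decomposition of $F_\tau(X_\tau)-F(X)$ and $G_\tau(X_\tau)-G(X)$, the mild It\^o formula applied to $\|e(t)\|^2$ with the dissipative term discarded, the one-sided Lipschitz bound for $DF_\tau$ and the $O(\tau)$ consistency estimates from Lemmas~\ref{l2.4.2.1} and~\ref{l2.4.2}, the moment bound \eqref{eq2.2.11.1}, and Gronwall. Your added remark on justifying the It\^o formula via \cite{DaPrato2019} matches exactly what the paper does.
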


\begin{proof}
	Define an error function $R(t) = X(t)-X_{\tau}(t)$, then $R(0) = 0$ and there holds
	\begin{equation*}
		dR(t) + AR(t)dt = (F(X(t))-F_{\tau}(X_{\tau}(t)))dt + (G(X(t))-G_{\tau}(X_{\tau}(t)))dW(t).
	\end{equation*}
	From the mild It\^o formula, it follows that
	\begin{equation*}
		\begin{aligned}
			\|R(t)\|^2 =&\ 2\int_0^t (R(s),-AR(s)+F(X(s))-F_{\tau}(X_{\tau}(s))) ds \\
			&+ 2\int_0^t\big( R(s),(G(X(s))-G_{\tau}(X_{\tau}(s)))dW(s) \big)\\
			&+ \int_0^t\|G(X(s))-G_{\tau}(X_{\tau}(s))\|^2_{\mathcal{L}_2(H)}ds.
		\end{aligned}
	\end{equation*}
	Taking expectation on both sides, we have
	\begin{equation*}
		\begin{aligned}
			\mathbb{E}\|R(t)\|^2 =&\ 2\mathbb{E}\int_0^t ( R(s),-AR(s)) ds + 2\mathbb{E}\int_0^t ( R(s),F(X(s))-F_{\tau}(X_{\tau}(s))) ds\\
			&+ \mathbb{E}\int_0^t\|G(X(s))-G_{\tau}(X_{\tau}(s))\|^2_{\mathcal{L}_2(H)}ds\\
			:=&\ I_1+I_2+I_3.
		\end{aligned}
	\end{equation*}
	
	Obviously, we get
	\begin{equation*}
		I_1 = -2\mathbb{E}\int_0^t (A^{\frac{1}{2}}R(s),A^{\frac{1}{2}}R(s)) ds \le 0. 
	\end{equation*}
	By \eqref{eq2.4.9} and \eqref{eq2.2.11.1}, we have
	\begin{equation*}
		\begin{aligned}
			I_2 =&\ 2\mathbb{E}\!\int_0^t\! (R(s),F(X(s))-F_{\tau}(X(s))) ds\! +\! 2\mathbb{E}\!\int_0^t\!( R(s),F_{\tau}(X(s))-F_{\tau}(X_{\tau}(s))) ds\\
			\le&\ \mathbb{E}\int_0^t\|R(s)\|^2ds + \mathbb{E}\int_0^t \|F(X(s))-F_{\tau}(X(s))\|^2 ds\\
			&+ 2\mathbb{E}\int_0^t \big(R(s),\int_0^1DF_{\tau}(\theta X(s)+(1-\theta)X_{\tau}(s))R(s)d\theta\big) ds\\
			=&\ \mathbb{E}\int_0^t\|R(s)\|^2ds + \mathbb{E}\int_0^t \|F(X(s))-F_{\tau}(X(s))\|^2 ds\\
			&+ 2\mathbb{E}\int_0^t \int_0^1\big(R(s),DF_{\tau}(\theta X(s)+(1-\theta)X_{\tau}(s))R(s)\big) d\theta ds\\
			\le&\ \mathbb{E}\int_0^t\|R(s)\|^2ds +C\tau^2 \mathbb{E}\int_0^t(1+\|X(s)\|_V^{5})^2ds + \!C\mathbb{E}\int_0^t\|R(s)\|^2ds\\
			\le&\  CT(1+\|\xi\|_1^{5})^2\tau^2 + C\int_0^t\mathbb{E}\|R(s)\|^2ds.
		\end{aligned}
	\end{equation*}
	In a similar way and by Lemma \ref{l2.4.2}, we have
	\begin{equation*}
		\begin{aligned}
			I_3 \le&\ \mathbb{E}\int_0^t\|G(X(s))-G_{\tau}(X(s))\|^2_{\mathcal{L}_2(H)} + \|G_{\tau}(X(s))-G_{\tau}(X_{\tau}(s))\|^2_{\mathcal{L}_2(H)}ds\\
			\le&\ C\tau^2\mathbb{E}\int_0^t(1+\|X(s)\|^3_V)^2ds + C\mathbb{E}\int_0^t\|R(s)\|^2ds\\
			\le&\ CT(1+\|\xi\|^3_1)^2\tau^2 + C\int_0^t\mathbb{E}\|R(s)\|^2ds.
		\end{aligned}
	\end{equation*}
	Above all, we have proved
	\begin{equation*}
		\mathbb{E}\|R(t)\|^2 \le C\tau^2 + C\int_0^t\mathbb{E}\|R(s)\|^2ds.
	\end{equation*}
	By Gronwall's inequality, we get $\mathbb{E}\|R(t)\|^2 \le C\tau^2$,
	where $C = C(\xi,T,\tau_0)>0$ is a constant.
\end{proof}

\subsection{Kolmogorov equation and a priori estimates}
For any given $\phi \in \mathcal{C}_b^2(H)$, define $U_{\tau}(t,\xi) = \mathbb{E}[\phi(X_{\tau}(t,\xi))]$.
According to \cite[Chap 9]{DaPrato2014}, $U_{\tau}(t,\xi)$ is a solution of the initial value problem of Kolmogorov equation associated with \eqref{eq2.4.8}
\begin{equation}\label{eq2.4.13}
	\begin{aligned}
		\frac{\partial U_{\tau}(t,\xi)}{\partial t} =&\ \mathcal{L}_{\tau}U_{\tau}(t,\xi)\\ 
		:=&\ (-A\xi+ F_{\tau}(\xi), DU_{\tau}(t,\xi))
		+ \frac{1}{2}\mathrm{Tr}\{D^2U_{\tau}(t,\xi)G_{\tau}(\xi)G_{\tau}^*(\xi)\} , \\
		U_{\tau}(0,\xi) =&\ \phi(\xi).
	\end{aligned}
\end{equation}
Now, we study the regularity of $U_{\tau}(t,\xi)$. 
For any given $t\in [0,T]$ and $\xi \in \dot{H}^1$, noticing that 
\begin{equation*}
	D^2U_{\tau}(t,\xi) \in \mathcal{L}(H\times H, \mathbb{R})\simeq\mathcal{L}(H,\mathcal{L}(H,\mathbb{R}))\simeq\mathcal{L}(H,H),
\end{equation*}
then $D^2U_{\tau}(t,\xi)$ can be regarded as a bounded linear map from $H$ to $H$.

\begin{lemma}\label{l2.4.3}
	Assume {\bf(A1)}-{\bf(A4)}. For any $\tau_0 \in (0,1)$ and $\phi \in \mathcal{C}_b^2(H)$, there exists a positive constant $C = C(T,\tau_0,\phi)$ such that for all $\tau \in (0,\tau_0)$, $\xi\in \dot{H}^1$ and $t \in [0,T]$
	\begin{equation*}
		\begin{aligned}
			\|DU_{\tau}(t,\xi)\| \le&\ C, \\
			\|D^2U_{\tau}(t,\xi)\|_{\mathcal{L}(H)} \le&\ C(1+\|\xi\|_1^{3}).
		\end{aligned}
	\end{equation*}
\end{lemma}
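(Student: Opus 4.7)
The plan is to use the standard representation of Fréchet derivatives of $U_\tau$ through variations of the stochastic flow. Since $U_\tau(t,\xi)=\mathbb{E}[\phi(X_\tau(t,\xi))]$ with $\phi\in\mathcal{C}_b^2(H)$, I would differentiate through the expectation to write
\begin{equation*}
DU_\tau(t,\xi)h = \mathbb{E}\bigl[(D\phi(X_\tau(t,\xi)),\eta^h(t))\bigr],
\end{equation*}
\begin{equation*}
D^2U_\tau(t,\xi)(h,k) = \mathbb{E}\bigl[D^2\phi(X_\tau(t,\xi))(\eta^h(t),\eta^k(t)) + (D\phi(X_\tau(t,\xi)),\zeta^{h,k}(t))\bigr],
\end{equation*}
where $\eta^h(t):=DX_\tau(t,\xi)h$ satisfies the linearized SPDE $d\eta^h+A\eta^h dt = DF_\tau(X_\tau)\eta^h dt + DG_\tau(X_\tau)\eta^h dW$ with $\eta^h(0)=h$, and $\zeta^{h,k}(t):=D^2X_\tau(t,\xi)(h,k)$ solves the analogous second-variation equation with the additional inhomogeneous drift $D^2F_\tau(X_\tau)(\eta^h,\eta^k)$ and diffusion $D^2G_\tau(X_\tau)(\eta^h,\eta^k)$, subject to $\zeta^{h,k}(0)=0$. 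The two claims then reduce to moment bounds on $\eta^h$ and $\zeta^{h,k}$.

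For the first estimate, I would apply the mild Itô formula to $\|\eta^h(t)\|^{2p}$ for integer $p\ge 1$, in the same spirit as the argument for $Y(t)$ in Theorem \ref{t2.2.3}. The $-A$ contribution is nonpositive, the drift correction is controlled by the one-sided Lipschitz estimate $(DF_\tau(u)v,v)\le C\|v\|^2$ from \eqref{eq2.4.9}, and the Itô correction is bounded using $\|DG_\tau(u)v\|_{\mathcal{L}_2(H)}\le C\|v\|$ from Lemma \ref{l2.4.2}. Gronwall's inequality then yields $\mathbb{E}\|\eta^h(t)\|^{2p}\le C\|h\|^{2p}$ uniformly in $\tau\in(0,\tau_0)$ and $\xi$. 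Taking $p=1$ and using $|D\phi(u)h|\le |\phi|_{\mathcal{C}_b^1(H)}\|h\|$ immediately gives $\|DU_\tau(t,\xi)\|\le C$.

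For the second estimate, applying Itô to $\|\zeta^{h,k}(t)\|^2$, dropping the dissipative $-A$ term, and invoking the polynomial bounds on $DF_\tau$, $D^2F_\tau$, $DG_\tau$, $D^2G_\tau$ from \eqref{eq2.4.9} and Lemma \ref{l2.4.2} together with Young's inequality to absorb $\|\zeta^{h,k}\|$ into the Gronwall term yields
\begin{equation*}
\mathbb{E}\|\zeta^{h,k}(t)\|^2 \le C\!\int_0^t\!\mathbb{E}\|\zeta^{h,k}(s)\|^2 ds + C\!\int_0^t\!\mathbb{E}\bigl[(1+\|X_\tau(s)\|_V^6)\|\eta^h(s)\|^2\|\eta^k(s)\|^2\bigr]ds.
\end{equation*}
The main obstacle is bounding the residual expectation: the cubic growth $\|D^2F_\tau(u)(v,w)\|\le C(1+\|u\|_V^3)\|v\|\|w\|$ is what produces the sixth power of $\|X_\tau\|_V$ and is precisely what forces the $\|\xi\|_1^3$ factor in the target bound. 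I would apply Hölder with exponents $(2,4,4)$, then invoke \eqref{eq2.4.17} with $p=12$ to obtain $(\mathbb{E}(1+\|X_\tau(s)\|_V^6)^2)^{1/2}\le C(1+\|\xi\|_1^6)$ and the higher-moment bound $\mathbb{E}\|\eta^h(s)\|^8\le C\|h\|^8$ established in the first step to conclude that the residual is bounded by $C(1+\|\xi\|_1^6)\|h\|^2\|k\|^2$. Gronwall then delivers $\mathbb{E}\|\zeta^{h,k}(t)\|\le (\mathbb{E}\|\zeta^{h,k}(t)\|^2)^{1/2}\le C(1+\|\xi\|_1^3)\|h\|\|k\|$, and combining this with $\mathbb{E}\|\eta^h\|\|\eta^k\|\le C\|h\|\|k\|$ (Cauchy–Schwarz plus the first-step bound) and the $\mathcal{C}_b^2$ property of $\phi$ completes the estimate $\|D^2U_\tau(t,\xi)\|_{\mathcal{L}(H)}\le C(1+\|\xi\|_1^3)$.
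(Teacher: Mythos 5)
Your proposal is correct and follows essentially the same route as the paper's proof: the same variation-of-flow representations for $DU_\tau$ and $D^2U_\tau$, the mild It\^o formula applied to $\|\eta\|^{2p}$ and $\|\zeta\|^2$ with the one-sided Lipschitz bound on $DF_\tau$ and the Hilbert--Schmidt bounds on $DG_\tau$, $D^2G_\tau$, the same H\"older splitting with exponents $(2,4,4)$ for the inhomogeneous term, and Gronwall. No substantive differences.
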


\begin{proof}
	Differentiating $U_{\tau}(t,\xi)$ with respect to $\xi$ along direction $y \in H$, according to \cite[Theorem 9.8]{DaPrato2014}, we obtain
	\begin{equation}\label{eq2.4.13.1}
		DU_{\tau}(t,\xi)y = \mathbb{E}[D\phi(X_{\tau}(t,\xi))\eta^{y,\xi}(t)],
	\end{equation}
	where $\eta^{y,\xi}(t) = DX_{\tau}(t,\xi)y$ satisfies $\eta^{y,\xi}(0) =y$ and
	\begin{equation}\label{eq2.4.13.6}
		d\eta^{y,\xi}(t) =\ -A\eta^{y,\xi}(t)dt+DF_{\tau}(X_{\tau}(t))\eta^{y,\xi}(t)dt + DG_{\tau}(X_{\tau}(t))\eta^{y,\xi}(t)dW(t).
	\end{equation}
	For simplifying notations, we will write $DF_{\tau}(X_{\tau}(t))$ and $DG_{\tau}(X_{\tau}(t))$ as $DF_{\tau}(t)$ and $DG_{\tau}(t)$ when no confusion occurs, respectively. 
	
	Let $p \ge 1$ be an integer and define a functional on $H$ by $\rho(u) = \|u\|^{2p}$. Applying the mild It\^o formula to $\rho(\eta^{y,\xi}(t))$, we have
	\begin{equation*}
		\begin{aligned}
			\rho(\eta^{y,\xi}(t)) =&\ \rho(\eta^{y,\xi}(0)) + \int_0^t\big(D\rho(\eta^{y,\xi}(s)),-A\eta^{y,\xi}(s)+DF_{\tau}(s)\eta^{y,\xi}(s)\big)ds\\
			&+ \int_0^t\big(D\rho(\eta^{y,\xi}(s)),DG_{\tau}(s)\eta^{y,\xi}(s)dW(s)\big)\\
			&+ \frac{1}{2}\int_0^t\mathrm{Tr}\{D^2\rho(\eta^{y,\xi}(s))(DG_{\tau}(s)\eta^{y,\xi}(s))(DG_{\tau}(s)\eta^{y,\xi}(s))^*\}ds.
		\end{aligned}
	\end{equation*}
	Here, $(D\rho(\eta^{y,\xi}(s)),u) = 2p\|\eta^{y,\xi}(s)\|^{2p-2}(\eta^{y,\xi}(s),u)$ for any $u \in H$, cf.\ \cite[(3.22)]{Gawarecki2011}, therefore we have $(D\rho(\eta^{y,\xi}(s)),-A\eta^{y,\xi}(s))\le0$.
	Applying \eqref{eq2.2.0}, \eqref{eq2.2.1}, \eqref{eq2.2.22}, \eqref{eq2.4.9}, and Lemma \ref{l2.4.2}, we get
	\begin{equation*}
		\begin{aligned}
			&\ \mathbb{E}\|\eta^{y,\xi}(t)\|^{2p} = \mathbb{E}\rho(\eta^{y,\xi}(t))\\ 
			\le&\ \mathbb{E}\rho(h) + \mathbb{E}\int_0^t2p\|\eta^{y,\xi}(s)\|^{2p-2}(\eta^{y,\xi}(s),DF_{\tau}(s)\eta^{y,\xi}(s))ds\\
			&+\! \frac{1}{2}\mathbb{E}\!\int_0^t\!\|D^2\rho(\eta^{y,\xi}(s))(DG_{\tau}(s)\eta^{y,\xi}(s))(DG_{\tau}(s)\eta^{y,\xi}(s))^*\|_{\mathcal{L}_1(H)}ds\\
			\le&\ \mathbb{E}\rho(h) + \mathbb{E}\int_0^t2p\|\eta^{y,\xi}(s)\|^{2p-2}\cdot C\|\eta^{y,\xi}(s)\|^2ds\\
			&+\! \frac{1}{2}\mathbb{E}\!\int_0^t\!\|D^2\rho(\eta^{y,\xi}(s))\|_{\mathcal{L}(H)}\!\|(DG_{\tau}(s)\eta^{y,\xi}(s))(DG_{\tau}(s)\eta^{y,\xi}(s))^*\|_{\mathcal{L}_1(H)}ds\\
			\le&\ \mathbb{E}\rho(h) + \mathbb{E}\int_0^tC\|\eta^{y,\xi}(s)\|^{2p}ds\\ 
			&\ + \frac{1}{2}\mathbb{E}\int_0^t2p(2p-1)\|\eta^{y,\xi}(s)\|^{2(p-1)}\|DG_{\tau}(s)\eta^{y,\xi}(s)\|^2_{\mathcal{L}_2(H)}ds\\
			\le&\ \mathbb{E}\rho(h) + \mathbb{E}\int_0^tC\|\eta^{y,\xi}(s)\|^{2p}ds+ \frac{1}{2}\mathbb{E}\int_0^t2Cp(2p-1)\|\eta^{y,\xi}(s)\|^{2(p-1)}\|\eta^{y,\xi}(s)\|^2ds\\
			\le&\ \mathbb{E}\rho(h) + C\mathbb{E}\int_0^t\|\eta^{y,\xi}(s)\|^{2p}ds.
		\end{aligned}
	\end{equation*}
	By Gronwall's inequality, we have
	\begin{equation}\label{eq2.4.13.2}
		\mathbb{E}\|\eta^{y,\xi}(t)\|^{2p} \le C\|y\|^{2p},
	\end{equation}
	where $C = C(T,\tau_0)>0$ is a constant.
	
	By \eqref{eq2.4.13.1}, H\"older inequality and \eqref{eq2.4.13.2} with $p=1$, we obtain
	\begin{equation*}
		\begin{aligned}
			&\ |DU_{\tau}(t,\xi)y| = |\mathbb{E}(D\phi(X_{\tau}(t,\xi)),\eta^{y,\xi}(t))|\\
			\le&\ C|\phi|_{\mathcal{C}_b^1(H)}\mathbb{E}\|\eta^{y,\xi}(t)\| \le C|\phi|_{\mathcal{C}_b^1(H)}(\mathbb{E}\|\eta^{y,\xi}(t)\|^2)^{\frac{1}{2}}
			\le C\|y\|,
		\end{aligned}
	\end{equation*}
	which implies the first estimate.
	
	Next we investigate the second inequality. Differentiating $U_{\tau}(t,\xi)$ twice with respect to $\xi$ and by \cite[Theorem 9.9]{DaPrato2014}, we obtain for $y,z \in H$
	\begin{equation}\label{eq2.4.13.4}
		D^2U_{\tau}(t,\xi)(y,z) =\ \mathbb{E}[D^2\phi(X_{\tau}(t,\xi))\big(\eta^{y,\xi}(t),\eta^{z,\xi}(t)\big)]+ \mathbb{E}[D\phi(X_{\tau}(t,\xi))\zeta^{y,z,\xi}(t)],
	\end{equation}
	where $\zeta^{y,z,\xi}(t) = D^2X_{\tau}(t,\xi)(y,z) = D\eta^{y,\xi}(t)z$ satisfies $\zeta^{y,z,\xi}(0) = 0$ and 
	\begin{equation}\label{eq2.4.13.7}
		\begin{aligned}
			d\zeta^{y,z,\xi}(t) =&\ \big(-A\zeta^{y,z,\xi}(t)+D^2F_{\tau}(t)\big(\eta^{y,\xi}(t),\eta^{z,\xi}(t)\big) + DF_{\tau}(t)\zeta^{y,z,\xi}(t)\big) dt\\
			&+ \big(D^2G_{\tau}(t)\big(\eta^{y,\xi}(t),\eta^{z,\xi}(t)\big) + DG_{\tau}(t)\zeta^{y,z,\xi}(t)\big)dW(t)\\
			:=&\ (a_{11}(t)+a_{12}(t)+a_{13}(t))dt + (a_{21}(t)+a_{22}(t))dW(t).
		\end{aligned}
	\end{equation}
	Obviously, the estimates for $a_{11}$, $a_{13}$ and $a_{22}$ follows from previous analysis with $p=1$.
	According to \eqref{eq2.4.9}, Lemma \ref{l2.4.2} and H\"older inequality, we have
	\begin{equation*}
		\begin{aligned}
			&\ \mathbb{E}\|a_{12}(t)\|^2
			\le C\mathbb{E}\big((1+\|X_{\tau}(t)\|_V^3)^2\|\eta^{y,\xi}(t)\|^2\|\eta^{z,\xi}(t)\|^2\big)\\
			\le&\ C\big(\mathbb{E}(1+\|X_{\tau}(t)\|_V^3)^4\big)^{\frac{1}{2}}\big(\mathbb{E}\|\eta^{y,\xi}(t)\|^8\big)^{\frac{1}{4}}\big(\mathbb{E}\|\eta^{z,\xi}(t)\|^8\big)^{\frac{1}{4}}
			\le C(1+\|\xi\|_1^6)\|y\|^2\|z\|^2
		\end{aligned}
	\end{equation*}
	and
	\begin{equation*}
		\mathbb{E}\|a_{21}(t)\|_{\mathcal{L}_2(H)}^2
		\le C\mathbb{E}\big((1+\|X_{\tau}(t)\|_V)^2\|\eta^{y,\xi}(t)\|^2\|\eta^{z,\xi}(t)\|^2\big)
		\le C(1+\|\xi\|^2_1)\|y\|^2\|z\|^2.
	\end{equation*}
	For any $u \in H$, define $\rho(u) = \|u\|^2$ and apply the mild It\^o formula to $\rho(\zeta^{y,z,\xi}(t))$, we obtain 
	\begin{equation*}
		\begin{aligned}
			\rho(\zeta^{y,z,\xi}(t)) =&\ \rho(\zeta^{y,z,\xi}(0)) + 2\int_0^t (a_{11}(s)+a_{12}(s)+a_{13}(s),\zeta^{y,z,\xi}(s)) ds\\
			&+ 2\int_0^t (\zeta^{y,z,\xi}(s),(a_{21}(s)+a_{22}(s))dW(s))\\
			&+ \frac{1}{2}\int_0^t\mathrm{Tr}\{D^2\rho(\zeta^{y,z,\xi}(s))(a_{21}(s)+a_{22}(s))(a_{21}(s)+a_{22}(s))^*\}ds.
		\end{aligned}
	\end{equation*}
	Inequality \eqref{eq2.2.22} implies $\|D^2\rho(u)\|_{\mathcal{L}(H)} \le 2$, then by \eqref{eq2.2.1} and \eqref{eq2.4.13.2} we have
	\begin{equation*}
		\begin{aligned}
			&\ \mathbb{E}\|\zeta^{y,z,\xi}(t)\|^2\\ 
			\le&\ 2\mathbb{E}\int_0^t (a_{11}(s)+a_{12}(s)+a_{13}(s),\zeta^{y,z,\xi}(s)) ds\\
			&\ + \mathbb{E}\int_0^t\|(a_{21}(s)+a_{22}(s))(a_{21}(s)+a_{22}(s))^*\|_{\mathcal{L}_1(H)}ds\\
			\le&\ 2\mathbb{E}\int_0^t (a_{11}(s)+a_{13}(s),\zeta^{y,z,\xi}(s))ds + \mathbb{E}\int_0^t \|a_{12}(s)\|^2ds + \mathbb{E}\int_0^t\|\zeta^{y,z,\xi}(s))\|^2ds\\
			&\ + 2\mathbb{E}\int_0^t\|a_{21}(s)\|_{\mathcal{L}_2(H)}^2+\|a_{22}(s)\|_{\mathcal{L}_2(H)}^2ds\\
			\le&\ C(1+\|\xi\|_1^6)\|y\|^2\|z\|^2 + C(1+\|\xi\|^2_1)\|y\|^2\|z\|^2 + C\mathbb{E}\int_0^t\|\zeta^{y,z,\xi}(s)\|^2ds.
		\end{aligned}
	\end{equation*}
	From Gronwall's inequality, it follows that
	\begin{equation}\label{eq2.4.13.5}
		\mathbb{E}\|\zeta^{y,z,\xi}(t)\|^2 \le C(1+\|\xi\|^6_1)\|y\|^2\|z\|^2.
	\end{equation}
	By \eqref{eq2.4.13.2}, \eqref{eq2.4.13.5} and H\"older inequality, we have
	\begin{equation*}
		\begin{aligned}
			&\ |D^2U_{\tau}(t,\xi)(y,z)| \\
			\le&\ |\mathbb{E}[D^2\phi(X_{\tau}(t,\xi))\big(\eta^{y,\xi}(t),\eta^{z,\xi}(t)\big)]| + |\mathbb{E}[D\phi(X_{\tau}(t,\xi))\zeta^{y,z,\xi}(t)]|\\
			\le&\ |\phi|_{\mathcal{C}_b^2(H)}\mathbb{E}\big[\|\eta^{y,\xi}(t)\|\cdot\|\eta^{z,\xi}(t)\|\big] + C|\phi|_{\mathcal{C}_b^1(H)}\mathbb{E}\|\zeta^{y,z,\xi}(t)\|\\
			\le&\ |\phi|_{\mathcal{C}_b^2(H)}(\mathbb{E}\|\eta^{y,\xi}(t)\|^2)^{\frac{1}{2}}(\mathbb{E}\|\eta^{z,\xi}(t)\|^2)^{\frac{1}{2}} + C|\phi|_{\mathcal{C}_b^1(H)}(\mathbb{E}\|\zeta^{y,z,\xi}(t)\|^2)^{\frac{1}{2}}\\
			\le&\ |\phi|_{\mathcal{C}_b^2(H)}\|y\|\|z\| + C|\phi|_{\mathcal{C}_b^1(H)}(1+\|\xi\|_1^{3})\|y\|\|z\|\\
			\le&\ C(1+\|\xi\|_1^{3})\|y\|\|z\|,
		\end{aligned}
	\end{equation*}
	where $C = C(T,\tau_0,\phi)>0$ is a constant.
\end{proof}

\section{Finite element approximation and regularity estimates}
\label{Sect4}

\subsection{Semi-disretized finite element approximation}
Let $h>0$ and $T_h$ be a family of regular partition of $\mathcal{D}$ with mesh size $h$. Let $V_h^0 \subset \dot{H}^1$ denote a family of finite element spaces of continuous piecewise linear functions with vanishing value at $\partial \mathcal{D}$. By $P^{h}:H \to V_h^0$ we denote the $L^2$-orthogonal projection operator.
Let $A_{h}: V_{h}^0 \rightarrow V_{h}^0$ be the discrete Laplacian operator defined by
\begin{equation*}
	( A_{h} \psi, \chi)=(\nabla \psi, \nabla \chi), \quad \forall \psi, \chi \in V_{h}^0.
\end{equation*}
Then take $\chi=\psi$, we get
\begin{equation}\label{eq2.3.0}
	\|A_{h}^{\frac{1}{2}} \psi\|^2=\|\nabla \psi\|^2=\|A^{\frac{1}{2}} \psi\|^2=\|\psi\|^2_{1}.
\end{equation}
We will often use the equivalence of the two norms with $\alpha \in [-\frac{1}{2},\frac{1}{2}]$, cf.\ \cite{Andersson2016}
\begin{equation}\label{eq2.3.1}
	C_1\| A_{h}^\alpha v^{h} \| \le \| A^\alpha v^{h} \| \le C_2\| A_{h}^\alpha v^{h} \| ,\quad \forall v^{h} \in V_{h}^0,
\end{equation}
where $C_i = C_i(\alpha)$ is a positive constant for $i = 1,2$. 

Let $S^{h}(t)$ be an analytic semigroup generated by $A_{h}$ and define a Ritz projection operator $R^{h}:\dot{H}^1\to V_{h}^0$ by
\begin{equation*}
	( \nabla R^{h}\psi,\nabla\chi ) = ( \nabla \psi,\nabla\chi ),\quad \forall \psi \in \dot{H}^1,\chi \in V_{h}^0.
\end{equation*}
Then, there exists an $h_0\in (0,1)$ such that for $h \in (0,h_0)$ there holds, cf.\ \cite{Andersson2016} 
\begin{equation}\label{eq2.3.2}
	\begin{aligned}
		&\|A^{\frac{s}{2}}(I-R^{h})A^{-\frac{r}{2}}\|_{\mathcal{L}(H)} \le Ch^{r-s}, \quad 0\le s \le 1 \le r \le2,\\
		&\|A^{\frac{s}{2}}(I-P^{h})A^{-\frac{r}{2}}\|_{\mathcal{L}(H)} \le Ch^{r-s}, \quad 0\le s \le 1,\ 0\le s\le r \le2,\\
		&\| A_{h}^\alpha S^{h}(t)P^{h} \|_{\mathcal{L}(H)}\le Ct^{-\alpha} , \quad 0 \le \alpha,\  0<t,
	\end{aligned}
\end{equation}
where $C=C(h_0,s,r,\alpha)$ is a positive constant.

The semi-discretized finite element approximation to \eqref{eq2.1.1} is defined as seeking $X^{h}(t)\in V_{h}^0$ such that
\begin{equation}\label{eq2.3.5}
	\begin{aligned}
		dX^{h}(t) + A_{h}X^{h}(t)dt =&\ P^{h}F(X^{h}(t))dt + P^{h}G(X^{h}(t))dW(t),\ t\in (0,T],\\
		X^{h}(0) =&\ P^{h} \xi.
	\end{aligned}
\end{equation}
Its mild solution $X^{h}(t)=X^{h}(t,\xi)$ is given by
\begin{equation}\label{eq2.3.6}
	X^{h}(t) = S^{h}(t)P^{h}\xi + \int_0^tS^{h}(t-s)P^{h}F(X^{h}(s))ds + \int_0^tS^{h}(t-s)P^{h}G(X^{h}(s))dW(s).
\end{equation}

We can apply the same argument in proving Theorem \ref{t2.2.3} to obtain a prior estimate for $X^h(t)$ in $\dot{H}^1$-norm, which is presented in next lemma and the proof is omitted.

\begin{lemma}\label{l2.3.1}
	Assume {\bf(A1)}-{\bf(A4)}. Then, for any $p \ge 2$, there exists a postive constant $C=C(\xi,T,p,h_0)$ such that for all $h \in (0,h_0)$ and $t \in [0,T]$
	\begin{equation*}
		\mathbb{E}\| X^{h}(t)\|_1^p +
		\mathbb{E}\| X^{h}(t)\|_{V}^p \le C.
	\end{equation*}
\end{lemma}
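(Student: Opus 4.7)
The plan is to mirror the proof of Theorem~\ref{t2.2.3} at the finite element level. Set $Y^h(t) := A_h^{1/2}X^h(t) \in V_h^0$. By \eqref{eq2.3.0}, $\|Y^h(t)\| = \|X^h(t)\|_1$, so it suffices to bound $\mathbb{E}\|Y^h(t)\|^{2p}$ for integer $p \ge 1$ uniformly in $h \in (0,h_0)$; the $V$-norm bound then follows from the Sobolev embedding $\dot{H}^1(\mathcal{D}) \hookrightarrow V$, valid in one spatial dimension.

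Applying $A_h^{1/2}$ to \eqref{eq2.3.5} and using the commutativity $A_h^{1/2} S^h(t) = S^h(t) A_h^{1/2}$ on the finite dimensional subspace $V_h^0$, I obtain the $V_h^0$-valued SDE
\begin{equation*}
dY^h(t) + A_h Y^h(t)\,dt = A_h^{1/2} P^h F(X^h(t))\,dt + A_h^{1/2} P^h G(X^h(t))\,dW(t),
\end{equation*}
with $Y^h(0) = A_h^{1/2} P^h \xi$. The mild It\^o formula is then applied to $\rho(Y^h(t)) = \|Y^h(t)\|^{2p}$ following \eqref{eq2.2.21}. The dissipative term satisfies $(D\rho(Y^h), -A_h Y^h) \le 0$ exactly as before. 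For the diffusion contribution, the norm equivalence \eqref{eq2.3.1}, the $\dot H^1$-stability $\|A^{1/2} P^h A^{-1/2}\|_{\mathcal{L}(H)} \le C$ implied by \eqref{eq2.3.2} with $s=r=1$, and Lemma~\ref{l2.2.1} together yield $\|A_h^{1/2} P^h G(X^h)\|_{\mathcal{L}_2(H)} \le C\|X^h\|_1 = C\|Y^h\|$, contributing a term $\le C\|Y^h\|^{2p}$ in the It\^o expansion.

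The main obstacle is the drift term $2p\|Y^h\|^{2p-2}(Y^h, A_h^{1/2} P^h F(X^h))$. In the continuous case Theorem~\ref{t2.2.3} handles its counterpart via the formal chain-rule identity $A^{1/2} F(X) = DF(X) A^{1/2} X$ combined with the one-sided Lipschitz estimate $(DF(u)v,v) \le C_F\|v\|^2$ from \eqref{eq2.2.5}. At the discrete level I would use the Ritz projection property: since $X^h \in V_h^0$,
\begin{equation*}
(A_h X^h, R^h F(X^h)) = (\nabla X^h, \nabla R^h F(X^h)) = (\nabla X^h, \nabla F(X^h)) = (DF(X^h)\nabla X^h, \nabla X^h) \le C\|X^h\|_1^2,
\end{equation*}
and control the residual $(A_h X^h, (P^h - R^h) F(X^h))$ by combining the norm equivalence \eqref{eq2.3.1} with the projection error bounds in \eqref{eq2.3.2}, so that the total drift contribution can be absorbed into a $C\|Y^h\|^{2p}$ term modulo factors handled by bootstrapping through the 1D embedding. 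Substituting these estimates into the It\^o expansion, taking expectation, and applying Gronwall's inequality yields the desired uniform bound $\mathbb{E}\|Y^h(t)\|^{2p} \le C(1 + \|\xi\|_1^{2p})$, which after applying the 1D Sobolev embedding gives the stated estimate for both norms.
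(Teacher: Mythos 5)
Your overall strategy coincides with the paper's: the paper states that Lemma~\ref{l2.3.1} follows by "the same argument as in Theorem~\ref{t2.2.3}" and omits the proof, and you correctly identify the one step that does not transfer verbatim, namely the drift term, since $A_h^{1/2}P^hF(X^h)$ does not factor as $DF(X^h)A_h^{1/2}X^h$ in the way the identity $A^{1/2}F(X)=DF(X)A^{1/2}X$ is used in the continuous proof. Your Ritz-projection identity for the main part is sound: since $X^h\in V_h^0$,
\[
(A_hX^h,R^hF(X^h))=(\nabla X^h,\nabla R^hF(X^h))=(\nabla X^h,\nabla F(X^h))=\int_{\mathcal D}f'(X^h)|\nabla X^h|^2\,dx\le C_F\|X^h\|_1^2,
\]
which recovers the one-sided Lipschitz bound at the discrete level.

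The treatment of the residual, however, is a genuine gap. Because $A_hX^h\in V_h^0$ and $P^h$ is the $L^2$-projection, the residual reduces to $(A_hX^h,(I-R^h)F(X^h))$, and every estimate available from \eqref{eq2.3.1}, \eqref{eq2.3.2} (with $s=r=1$) and inverse inequalities yields
\[
|(A_hX^h,(I-R^h)F(X^h))|\le C\|A_h^{1/2}X^h\|\,\|F(X^h)\|_1\le C(1+\|X^h\|_V^2)\|Y^h\|^2,
\]
where the last step uses \eqref{eq2.2.5}: the powers of $h$ cancel, but the factor $1+\|X^h\|_V^2$ does not go away. Since in one dimension $\|X^h\|_V\le C\|X^h\|_1=C\|Y^h\|$, the drift contribution to the It\^o expansion of $\|Y^h\|^{2p}$ is of order $\mathbb{E}\big[(1+\|Y^h\|^2)\|Y^h\|^{2p}\big]$, i.e.\ superlinear in the very quantity being estimated, and Gronwall's inequality cannot close the loop; H\"older applied to split off $\mathbb{E}(1+\|X^h\|_V^{2q})$ is circular, because those moments are part of what the lemma asserts. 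The phrase ``handled by bootstrapping through the 1D embedding'' is exactly where a real argument is required: one must either first establish uniform $V$-norm (or higher $L^q$) moment bounds for $X^h$ by an independent argument, or exploit the favorable sign of the cubic term at the discrete level (e.g.\ through a discrete energy or Gagliardo--Nirenberg argument), before the $\dot H^1$ moments can be closed. As written, the proposal does not complete the proof.
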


For any $\phi \in C_b^2(H)$, define $U^{h}(t,\xi) = \mathbb{E}[\phi(X^{h}(t,\xi))]$ with $U^{h}(0,\xi) = \phi(\xi)$, which is a solution of an initial value problem of Kolmogorov equation associated with \eqref{eq2.3.5}, cf.\ \cite[Chap 4]{Andersson2016}
\begin{equation}\label{eq2.3.9}
	\begin{aligned}
		&\ \frac{\partial U^{h}(t,\xi)}{\partial t} =\ \mathcal{L}^{h} U^{h}(t,\xi)\\ 
		=&\ ( -A_{h}\xi+P^{h}F(\xi),DU^{h}(t,\xi)) 
		+ \frac{1}{2}\mathrm{Tr}\{D^2U^{h}(t,\xi)P^{h}G(\xi)(P^{h}G(\xi))^*\}.
	\end{aligned}
\end{equation}

\subsection{Regularity estimate}
In this section, we investigate the regularity of the solution $X^{h}(t)$. 

\begin{theorem}\label{th2.3.1}
	Assume {\bf(A1)}-{\bf(A4)}. Then for any $\beta \in [0,\frac{1}{2})$, there exists a positive constant $C=C(\xi,T,h_0,\beta)$ such that for all $h \in(0,h_0)$, there holds
	\begin{equation*}
		\mathbb{E}\|A_{h}^{\beta}\mathcal{D}_sX^{h}(t)\|_{\mathcal{L}(H)}^2 \le C, \quad \forall 0\le s < t \le T.
	\end{equation*}
\end{theorem}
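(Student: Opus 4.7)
The plan is to exploit the linear SPDE satisfied by the Malliavin derivative of $X^h$ together with the smoothing of the discrete semigroup. Formally differentiating the mild solution \eqref{eq2.3.6} shows that, for any $u \in H$ and $s \in [0,T]$, the process $Y^{s,u}(t) := \mathcal{D}_s^u X^h(t)$ satisfies, for $t \ge s$,
\begin{equation*}
Y^{s,u}(t) = S^h(t-s) P^h G(X^h(s)) u + \int_s^t S^h(t-r) P^h DF(X^h(r)) Y^{s,u}(r)\, dr + \int_s^t S^h(t-r) P^h DG(X^h(r)) Y^{s,u}(r)\, dW(r),
\end{equation*}
with initial value $Y^{s,u}(s) = P^h G(X^h(s)) u$. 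Since $\|\mathcal{D}_s X^h(t)\|_{\mathcal{L}(H)} = \sup_{\|u\| \le 1}\|Y^{s,u}(t)\|$, the target reduces to an estimate $\mathbb{E}\|A_h^\beta Y^{s,u}(t)\|^2 \lesssim \|u\|^2$ that is uniform in $u$.

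First I would establish a baseline $L^2$-estimate $\sup_{t \in [s,T]} \mathbb{E}\|Y^{s,u}(t)\|^2 \le C\|u\|^2$ by mimicking the mild It\^o argument used in Theorem~\ref{t2.2.3} and Lemma~\ref{l2.4.3}. Applying the mild It\^o formula to $\rho(v) = \|v\|^2$ and taking expectations, the $-A_h$ term is discarded by dissipativity, the drift contribution is absorbed via the one-sided Lipschitz estimate $(DF(X^h)v,v) \le C_F\|v\|^2$ from \eqref{eq2.2.5}, and the It\^o correction is controlled by the Hilbert-Schmidt bound $\|DG(X^h)v\|_{\mathcal{L}_2(H)} \le C\|v\|$ of Lemma~\ref{l2.2.1}. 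The initial condition is handled via $\|Y^{s,u}(s)\| \le \|G(X^h(s))\|_{\mathcal{L}(H)}\|u\| \le C(1+\|X^h(s)\|)\|u\|$ together with Lemma~\ref{l2.3.1}, and Gr\"onwall's inequality closes the estimate.

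I would then apply $A_h^\beta$ to the mild equation and bound each of the three resulting terms in $L^2(\Omega,H)$. For the initial-data term, \eqref{eq2.3.2} gives the smoothing $\|A_h^\beta S^h(t-s) P^h\|_{\mathcal{L}(H)} \le C(t-s)^{-\beta}$, combined with $\|G(X^h(s))\|_{\mathcal{L}(H)} \le C(1+\|X^h(s)\|)$. For the drift integral, the semigroup norm is pulled out, $\|DF(X^h(r))Y^{s,u}(r)\|$ is controlled via \eqref{eq2.2.5}, and Cauchy-Schwarz together with the baseline $L^2$-bound and the $V$-moments from Lemma~\ref{l2.3.1} close the argument, since $\int_s^t(t-r)^{-\beta}dr$ is finite. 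For the stochastic integral, It\^o's isometry together with $\|ST\|_{\mathcal{L}_2(H)} \le \|S\|_{\mathcal{L}(H)}\|T\|_{\mathcal{L}_2(H)}$ from \eqref{eq2.2.1} and the Hilbert-Schmidt estimate on $DG$ from Lemma~\ref{l2.2.1} yield
\begin{equation*}
\mathbb{E}\Bigl\|\int_s^t A_h^\beta S^h(t-r) P^h DG(X^h(r)) Y^{s,u}(r)\, dW(r)\Bigr\|^2 \le C \int_s^t (t-r)^{-2\beta}\mathbb{E}\|Y^{s,u}(r)\|^2 dr,
\end{equation*}
which is finite exactly when $\beta < 1/2$. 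A Gr\"onwall argument of Volterra type collects the contributions, and taking the supremum over $\|u\| \le 1$ delivers the $\mathcal{L}(H)$-norm bound.

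The main obstacle is the mismatch between the operator-norm smoothing estimate for $A_h^\beta S^h(t-r) P^h$ and the It\^o isometry, which demands the full stochastic integrand to be controlled in Hilbert-Schmidt norm. The bridge is precisely the Hilbert-Schmidt bound on $DG$ in Lemma~\ref{l2.2.1}, which in turn relies on the structural hypothesis \textbf{(A2)} on $G$ together with the Hilbert-Schmidt property of $A^{-1/4-\delta/2}$ from \eqref{eq2.2.00}; the same integrability requirement $\int_s^t (t-r)^{-2\beta}\,dr < \infty$ is also what forces the restriction $\beta < 1/2$ in the statement.
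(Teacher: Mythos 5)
Your overall architecture matches the paper's: take the Malliavin derivative of the mild equation, establish a baseline moment bound by the mild It\^o formula (using dissipativity of $-A_h$, the one-sided Lipschitz property of $DF$, and the Hilbert--Schmidt bound on $DG$), then apply $A_h^\beta$ to the mild formula and treat the three terms separately, with the restriction $\beta<\tfrac12$ coming from $\int_s^t(t-r)^{-2\beta}dr<\infty$. The drift and stochastic integrals are handled essentially as in the paper. However, your treatment of the initial-data term has a genuine gap. You propose to bound $A_h^\beta S^h(t-s)P^hG(X^h(s))u$ via the smoothing estimate $\|A_h^\beta S^h(t-s)P^h\|_{\mathcal{L}(H)}\le C(t-s)^{-\beta}$ together with $\|G(X^h(s))\|_{\mathcal{L}(H)}\le C(1+\|X^h(s)\|)$. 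This yields a bound proportional to $(t-s)^{-2\beta}\|u\|^2$, which blows up as $t\downarrow s$, whereas the theorem asserts a constant uniform over all $0\le s<t\le T$. That uniformity is not cosmetic: in the weak-error estimate $e^{1,3}$ the bound is integrated in $s$ against an already borderline singular kernel, so an extra factor $(t-s)^{-2\beta}$ with $\beta$ near $\tfrac12$ would destroy integrability. The paper instead pays for $A_h^\beta$ out of the spatial regularity of the noise coefficient rather than out of the semigroup: by the norm equivalence \eqref{eq2.3.1} and $\dot H^1\hookrightarrow\dot H^{2\beta}$ one has $\|A_h^\beta P^hG(X^h(s))u\|\le C\|A_h^{1/2}P^hG(X^h(s))u\|$, and then the last estimate of Lemma~\ref{l2.2.1}, $\|A^{1/2}G(v)\|_{\mathcal{L}_2(H)}\le C\|v\|_1$, combined with the $\dot H^1$ moment bound of Lemma~\ref{l2.3.1}, gives $I\le C\|u\|^2$ with no singularity, using only $\|S^h(t-s)\|_{\mathcal{L}(H)}\le C$.

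A second, more minor point: your baseline estimate is only $\mathbb{E}\|Y^{s,u}(t)\|^2\le C\|u\|^2$, but when you invoke Cauchy--Schwarz on the drift term you must control $\mathbb{E}\bigl[(1+\|X^h(r)\|_V^2)^2\|Y^{s,u}(r)\|^2\bigr]$, which requires the fourth moment $\mathbb{E}\|Y^{s,u}(r)\|^4$. This is why the paper runs the mild It\^o argument with $\rho(v)=\|v\|^4$ rather than $\|v\|^2$. The fix is routine (the same argument goes through with the higher power), but as written your $L^2$ baseline does not suffice to close the estimate for the drift integral.
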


\begin{proof}
	We first prove the case of $\beta = 0$. For any given $u \in H$, let $\mathcal{D}_s^uX^{h}(t,\xi) = \mathcal{D}_sX^{h}(t)u$ denote the Malliavin derivative of $X^{h}(t,\xi)$ in direction $u$. Taking Malliavin derivative in direction $u$ on both sides of \eqref{eq2.3.6}, we get for $0\le s \le t \le T$
	\begin{equation}\label{eq2.3.10}
		\begin{aligned}
			\mathcal{D}_s^uX^{h}(t) =&\ \int_s^tS^{h}(t-r)P^{h}DF(X^{h}(r)) \mathcal{D}_s^uX^{h}(r)dr \ +\\
			&\ S^{h}(t-s)P^{h}G(X^{h}(s))u + \int_s^t S^{h}(t-r)P^{h}DG(X^{h}(r)) \mathcal{D}_s^uX^{h}(r)dW(r),
		\end{aligned}
	\end{equation}
	and for other $s$ and $t$, $\mathcal{D}_s^uX^{h}(t) = 0$.
	According to \cite[Chapter 5]{Kruse2014}, $\mathcal{D}_s^uX^{h}(t)$ is a mild solution to the following equation for $0\le s \le t \le T$
	\begin{equation*}
		\begin{aligned}
			d\mathcal{D}_s^uX^{h}(t) =&\ -A_{h}\mathcal{D}_s^uX^{h}(t)dt +  P^{h}DF(X^{h}(t))\mathcal{D}_s^uX^{h}(t)dt\\ 
			&\ + P^{h}DG(X^{h}(t))\mathcal{D}_s^uX^{h}(t)dW(t).
		\end{aligned}
	\end{equation*}
	Let $t=s$ in \eqref{eq2.3.10}, then we get $\mathcal{D}_s^uX^{h}(s) = P^{h}G(X^{h}(s))u$. For later use, we will directly estimate $\mathbb{E}\|\mathcal{D}_s^uX^{h}(t)\|^4$ instead of $\mathbb{E}\|\mathcal{D}_s^uX^{h}(t)\|^2$.  Define a functional $\rho(u)=\|u\|^4$ on $H$ and apply the mild It\^o formula, we obtain
	\begin{equation*}
		\begin{aligned}
			&\ \|\mathcal{D}_s^uX^{h}(t)\|^4 = \rho(\mathcal{D}_s^uX^{h}(t))\\ =&\ \|P^{h}G(X^{h}(s))u\|^4 +  \int_s^t\big(D\rho(\mathcal{D}_s^uX^{h}(r)),(-A_{h}+P^{h}DF(X^{h}(r)))\mathcal{D}_s^uX^{h}(r)\big)dr\\
			&\ + \int_s^t\big(D\rho(\mathcal{D}_s^uX^{h}(r)),P^{h}DG(X^{h}(r))\mathcal{D}_s^uX^{h}(r)dW(r)\big)\\
			&\ +\frac{1}{2}\int_s^t\!\!\mathrm{Tr}\{D^2\rho(\mathcal{D}_s^uX^{h}(r))(P^{h}DG(X^{h}(r))\mathcal{D}_s^uX^{h}(r))(P^{h}DG(X^{h}(r))\mathcal{D}_s^uX^{h}(r))^*\}dr.
		\end{aligned}
	\end{equation*}
	Here $(D\rho(\mathcal{D}_s^uX^{h}(r)),u) = 4\|\mathcal{D}_s^uX^{h}(r)\|^2(\mathcal{D}_s^uX^{h}(r),u)$ for any $u \in H$, therefore
	\begin{equation*}
		(D\rho(\mathcal{D}_s^uX^{h}(r)),-A_{h}\mathcal{D}_s^uX^{h}(r))\le0.
	\end{equation*}  
	By \eqref{eq2.2.5}, \eqref{eq2.2.1.2}, \eqref{eq2.2.22} and Lemma \ref{l2.3.1}, we have
	\begin{equation*}
		\begin{aligned}
			\mathbb{E}\|\mathcal{D}_s^uX^{h}(t)\|^4\le&\ \mathbb{E}\|P^{h}G(X^{h}(s))u\|^4 + 4C\mathbb{E}\int_s^t\|\mathcal{D}_s^uX^{h}(r)\|^4dr\\
			&\ +6\mathbb{E}\int_s^t\|\mathcal{D}_s^uX^{h}(r)\|^2\cdot\|P^{h}DG(X^{h}(r))\mathcal{D}_s^uX^{h}(r)\|^2_{\mathcal{L}_2(H)}dr\\
			\le&\ C\mathbb{E}(1+\|X^{h}(s)\|)^4\|u\|^4 + C\mathbb{E}\int_s^t\|\mathcal{D}_s^uX^{h}(r)\|^4dr\\
			\le&\ C\|u\|^4 + C\mathbb{E}\int_s^t\|\mathcal{D}_s^uX^{h}(r)\|^4dr,
		\end{aligned}
	\end{equation*}
	By Gronwall's inequality, we have
	\begin{equation}\label{eq2.3.12}
		\mathbb{E}\|\mathcal{D}_s^uX^{h}(t)\|^4 \le C\|u\|^4,
	\end{equation}
	where $C = C(\xi,T,h_0)>0$ is a constant. By H\"older inequality, we can deduce that
	\begin{equation*}
		\mathbb{E}\|\mathcal{D}_s^uX^{h}(t)\|^2 \le (\mathbb{E}\|\mathcal{D}_s^uX^{h}(t)\|^4)^{\frac{1}{2}} \le C\|u\|^2.
	\end{equation*}
	
	Now we investigate the case of $\beta \in (0,\frac{1}{2})$. Applying $A_{h}^{\beta}$ to both sides of \eqref{eq2.3.10}, we obtain
	\begin{equation}\label{eq2.3.11}
		\begin{aligned}
			A_{h}^{\beta}\mathcal{D}_s^uX^{h}(t) =&\ A_{h}^{\beta}S^{h}(t-s)P^{h}G(X^{h}(s))u +\!\! \int_s^t\!\!A_{h}^{\beta}S^{h}(t-r)P^{h}DF(X^{h}(r)) \mathcal{D}_s^uX^{h}(r)dr\\
			&+ \int_s^t A_{h}^{\beta}S^{h}(t-r)P^{h}DG(X^{h}(r)) \mathcal{D}_s^uX^{h}(r)dW(r).
		\end{aligned}
	\end{equation}
	By H\"older inequality and It\^o isometry, we obtain
	\begin{equation*}
		\begin{aligned}
			\mathbb{E}\|A_{h}^{\beta}\mathcal{D}_s^uX^{h}(t)\|^2 
			\le&\ 3\mathbb{E}\|A_{h}^{\beta}S^{h}(t-s)P^{h}G(X^{h}(s))u\|^2\\
			&\ +3(t-s)\mathbb{E}\int_s^t\|A_{h}^{\beta}S^{h}(t-r)P^{h}DF(X^{h}(r)) \mathcal{D}_s^uX^{h}(r)\|^2dr\\
			&\ +3\mathbb{E}\int_s^t \|A_{h}^{\beta}S^{h}(t-r)P^{h}DG(X^{h}(r)) \mathcal{D}_s^uX^{h}(r)\|_{\mathcal{L}_2(H)}^2dr\\
			=&\ I+II+III.
		\end{aligned}
	\end{equation*}
	According to \eqref{eq2.3.1} and the Sobolev embedding property $\dot{H}^1\hookrightarrow\dot{H}^{2\beta}$, we have
	\begin{equation*}
		\begin{aligned}
			&\ \|A_{h}^{\beta}P^{h}G(X^{h}(s))u\| \le C\|A^{\beta}P^{h}G(X^{h}(s))u\| = C\|P^{h}G(X^{h}(s))u\|_{2\beta}\\ 
			\le&\ C\|P^{h}G(X^{h}(s))u\|_{1} = C\|A^{\frac{1}{2}}P^{h}G(X^{h}(s))u\| \le C\|A_{h}^{\frac{1}{2}}P^{h}G(X^{h}(s))u\|.
		\end{aligned}
	\end{equation*}
	Therefore, by Lemma \ref{l2.2.1} and \ref{l2.3.1}, we get
	\begin{equation*}
		\begin{aligned}
			I =&\ 3\mathbb{E}\|S^{h}(t-s)A_{h}^{\beta}P^{h}G(X^{h}(s))u\|^2 \\
			\le&\ 3\mathbb{E}(\|S^{h}(t-s)\|^2_{\mathcal{L}(H)}\cdot\|A_{h}^{\beta}P^{h}G(X^{h}(s))u\|^2)
			\le C\mathbb{E}\|A_{h}^{\frac{1}{2}}P^{h}G(X^{h}(s))u\|^2\\
			\le&\ C\mathbb{E}\|A^{\frac{1}{2}}G(X^{h}(s))u\|^2 
			\le C\mathbb{E}(\|X^h(s)\|_1^2\cdot\|u\|^2)\le C\|u\|^2.
		\end{aligned}
	\end{equation*}
	By \eqref{eq2.2.5}, \eqref{eq2.3.2}, H\"older inequality, Lemma \ref{l2.3.1} and \eqref{eq2.3.12}, we have
	\begin{equation*}
		\begin{aligned}
			II \le&\ 3T\mathbb{E}\int_s^t\|A_{h}^{\beta}S^{h}(t-r)P^{h}\|^2_{\mathcal{L}(H)}\|DF(X^{h}(r)) \mathcal{D}_s^uX^{h}(r)\|^2dr\\
			\le&\ 3T\mathbb{E}\int_s^tC(t-r)^{-2\beta}(1+\|X^{h}(r)\|_V^2)^2\|\mathcal{D}_s^uX^{h}(r)\|^2dr\\
			\le&\ 3T\int_s^tC(t-r)^{-2\beta}(\mathbb{E}(1+\|X^{h}(r)\|_V^2)^4)^{\frac{1}{2}}(\mathbb{E}\|\mathcal{D}_s^uX^{h}(r)\|^4)^{\frac{1}{2}}dr\\
			\le&\ 3T\int_s^tC(t-r)^{-2\beta}\cdot C\cdot C\|u\|^2dr \le C\|u\|^2,
		\end{aligned}
	\end{equation*}
	here we use the fact that $\int_s^t(t-r)^{-2\beta}dr<\infty$.
	Similarly, we get
	\begin{equation*}
		III \le C\|u\|^2.
	\end{equation*}
	Thus we have proved that
	\begin{equation*}
		\mathbb{E}\|A_{h}^{\beta}\mathcal{D}_s^uX^{h}(t)\|^2 \le C\|u\|^2,
	\end{equation*}
	where $C=C(\xi,T,h_0,\beta)$ is a positive constant.
\end{proof}

\section{Weak convergence analysis for spatial semi-discretization}
\label{Sect5}

In this section, we turn to analyze the weak convergence order of the semi-discretized FEM approximation. For any given $\phi\in \mathcal{C}_b^2(H)$, define a weak error
\begin{equation*}
	\begin{aligned}
		e_w =&\ |\mathbb{E}[\phi(X(T))]-\mathbb{E}[\phi(X^{h}(T))]|\\
		\le&\ |\mathbb{E}[\phi(X(T))]-\mathbb{E}[\phi(X_{\tau}(T))]| + |\mathbb{E}[\phi(X_{\tau}(T))]-\mathbb{E}[\phi(X^{h}(T))]|.
	\end{aligned}
\end{equation*}
An estimate for the first term has been obtained in Theorem \ref{t2.4.1}. Now we focus on the second term.

\begin{theorem}\label{th2.5.1}
	Assume {\bf(A1)}-{\bf(A4)}. Then for any given $0<\varepsilon<1$ and $\phi \in \mathcal{C}_b^2(H)$, there exists a positive constant $C = C(\xi,T,h_0,\tau_0,\varepsilon,\phi)$, such that for all $h \in (0,h_0)$ and $\tau\in(0,\tau_0)$
	\begin{equation*}
		|\mathbb{E}[\phi(X_{\tau}(T))]-\mathbb{E}[\phi(X^{h}(T))]| \le C(\tau+ h^{1-\varepsilon}).
	\end{equation*}
\end{theorem}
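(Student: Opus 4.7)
The plan is to use the Kolmogorov equation trick via Itô's formula, splitting the weak error into an initial-data mismatch and a time-integrated difference of generators. First, introduce the intermediate trajectory started from $P^h\xi$ and write
\begin{equation*}
\mathbb{E}[\phi(X_\tau(T,\xi))] - \mathbb{E}[\phi(X^h(T))] = \bigl[U_\tau(T,\xi) - U_\tau(T,P^h\xi)\bigr] + \bigl[U_\tau(T,P^h\xi) - \mathbb{E}\,U_\tau(0,X^h(T))\bigr].
\end{equation*}
The first bracket is bounded by $\|DU_\tau(T,\cdot)\|\cdot\|(I-P^h)\xi\| \le Ch\|\xi\|_1$ using Lemma \ref{l2.4.3} and \eqref{eq2.3.2}, giving an $O(h)$ contribution.

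For the second bracket I apply the Itô formula in $H$ to $t\mapsto U_\tau(T-t,X^h(t))$, using the strong form $dX^h = (-A_hX^h+P^hF(X^h))\,dt + P^hG(X^h)\,dW$ and the Kolmogorov equation \eqref{eq2.4.13}; the stochastic integral vanishes upon taking expectations, yielding
\begin{equation*}
U_\tau(T,P^h\xi) - \mathbb{E}\,U_\tau(0,X^h(T)) = -\mathbb{E}\int_0^T\bigl(\mathcal{L}^h - \mathcal{L}_\tau\bigr)U_\tau(T-t,X^h(t))\,dt,
\end{equation*}
which splits as a drift piece $\bigl(DU_\tau,\,(A-A_h)X^h + P^hF(X^h) - F_\tau(X^h)\bigr)$ and a diffusion piece $\tfrac12\operatorname{Tr}\bigl\{D^2U_\tau\bigl[P^hG(X^h)(P^hG(X^h))^* - G_\tau(X^h)G_\tau(X^h)^*\bigr]\bigr\}$.

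For the drift piece I further split $P^hF - F_\tau = (P^h - I)F + (F - F_\tau)$: the temporal part $F - F_\tau$ contributes $O(\tau)$ by Lemma \ref{l2.4.2.1} combined with the moment bound from Lemma \ref{l2.3.1} and the $H$-bound on $DU_\tau$; the spatial parts $(I-P^h)F(X^h)$ and $(A-A_h)X^h$ are handled by moving $(I-P^h)$ onto the test side via self-adjointness and exploiting the Ritz-projection identity $A_hR^h = P^hA$, then transferring a fractional power $A^\beta$ from the operator side onto $X^h$ (using its $\dot{H}^1$-regularity), which with \eqref{eq2.3.2} gives an $O(h^{1-\varepsilon})$ contribution.

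The main obstacle is the trace piece, since Lemma \ref{l2.4.3} controls $D^2U_\tau$ only in $\mathcal{L}(H)$ rather than in $\mathcal{L}_1(H)$, so a direct norm estimate fails. Here I invoke the Malliavin integration-by-parts formula (Lemma \ref{l2.2.4}): telescoping $P^hGG^*P^h - G_\tau G_\tau^* = (P^hG-G_\tau)(P^hG)^* + G_\tau((P^hG)^* - G_\tau^*)$ and rewriting $\operatorname{Tr}\{D^2U_\tau\,\Sigma\Psi^*\}$ as expectations of $DU_\tau$ paired with Malliavin-directional derivatives $\mathcal{D}_s X^h$, one trades $D^2U_\tau$ for $DU_\tau$ against $\mathcal{D} X^h$. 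By Theorem \ref{th2.3.1}, $A_h^\beta\mathcal{D}_s X^h$ is uniformly bounded for any $\beta<1/2$, so a fractional power $A^\beta$ is absorbed onto $\mathcal{D} X^h$ rather than onto $U_\tau$; combined with $\|(I-P^h)A^{-\beta}\|_{\mathcal{L}(H)} \le Ch^{2\beta}$ from \eqref{eq2.3.2} and $\|G_\tau - G\|_{\mathcal{L}_2(H)} \le C\tau(1+\|\cdot\|_V^3)$ from Lemma \ref{l2.4.2}, choosing $2\beta = 1-\varepsilon$ yields the spatial rate $h^{1-\varepsilon}$ (the $\varepsilon$ loss reflecting the threshold $\beta = 1/2$). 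Adding the $O(\tau)$ contributions from the temporal mismatches of $F$ and $G$ produces the claimed bound $C(\tau + h^{1-\varepsilon})$.
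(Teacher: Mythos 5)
Your overall architecture (initial-data mismatch plus the It\^o/Kolmogorov generator difference, the drift split $P^hF-F_\tau=(P^h-I)F+(F-F_\tau)$, and the telescoped diffusion difference) matches the paper, and your $O(\tau)$ contributions and the $(I-P^h)F$ term are handled the same way. But there is a genuine gap in the term $\mathbb{E}\int_0^T((A-A_h)X^h(t),DU_\tau(T-t,X^h(t)))\,dt$, which is the hardest part of the proof. After the Ritz identity this becomes $(X^h(t),A_hP^h(R^h-I)DU_\tau)$, and ``transferring a fractional power onto $X^h$ using its $\dot H^1$-regularity'' only lets you move $A_h^{1/2}$; what remains is $\|A_h^{1/2}P^h(R^h-I)DU_\tau\|$, and since Lemma \ref{l2.4.3} bounds $DU_\tau$ only in $H$, the estimates \eqref{eq2.3.2} extract no power of $h$ from $A^{1/2}(I-R^h)$ acting on an $H$-bounded vector. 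To reach $h^{1-\varepsilon}$ one must move nearly a full power $A_h^{1-\varepsilon/2}$ away from $(R^h-I)$, and the paper does this by substituting the mild solution \eqref{eq2.3.6} for $X^h(t)$ and invoking the analytic smoothing $\|A_h^{1-\frac{1}{2}\varepsilon}S^h(t-s)P^h\|_{\mathcal{L}(H)}\le C(t-s)^{-1+\frac{1}{2}\varepsilon}$, which is integrable. That substitution produces a stochastic-convolution term whose expectation does not vanish (the factor $DU_\tau(T-t,X^h(t))$ is $\mathcal W_t$-measurable and correlated with the noise increments), and it is exactly there --- not in the trace term --- that the Malliavin integration by parts of Lemma \ref{l2.2.4} and the bound on $A_h^{\beta}\mathcal D_sX^h$ from Theorem \ref{th2.3.1} are required. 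Your proposal never addresses this term.

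Relatedly, your use of Malliavin integration by parts on the trace term does not go through as described: Lemma \ref{l2.2.4} converts $\mathbb{E}( F,\int_0^T\Phi\,dW)$ into $\mathbb{E}\int_0^T(\mathcal D_tF,\Phi)_{\mathcal L_2(H)}\,dt$, so via the chain rule it turns a \emph{first} derivative of $U_\tau$ paired with a stochastic integral into a \emph{second} derivative paired with $\mathcal D_sX^h$, not the other way around; the trace term contains no stochastic integral to dualize against, so there is no mechanism to ``trade $D^2U_\tau$ for $DU_\tau$.'' Fortunately none is needed: the paper bounds $e^{3,3}$ and $e^{3,4}$ directly by cycling the trace, inserting $A^{\frac{1}{2}}(I-P^h)A^{-\frac{1}{2}}$, and combining $\|A^{\frac{1}{2}}G(u)\|_{\mathcal L_2(H)}\le C\|u\|_1$ from Lemma \ref{l2.2.1} with the $\mathcal L(H)$-bound on $D^2U_\tau$ from Lemma \ref{l2.4.3}, which already yields $O(h)$; the $G_\tau$-versus-$G$ pieces give $O(\tau)$ as you say. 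In short, you have placed the Malliavin argument where it is neither valid nor needed, and omitted it where the proof cannot close without it.
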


\begin{proof}
	Obviously, for any given $\phi \in \mathcal{C}_b^2(H)$ and $\mathcal{F}_{T}$-measurable, $H$-valued random variable $\eta$, there holds \cite[(5.1)]{Andersson2016}
	\begin{equation*}
		\mathbb{E}[\phi(\eta)] = \mathbb{E}[\mathbb{E}[\phi(\eta)|\mathcal{F}_T]] = \mathbb{E}[\mathbb{E}[\phi(X_{\tau}(0,\eta))|\mathcal{F}_T]] = \mathbb{E}[U_{\tau}(0,\eta)].
	\end{equation*}
	Then we can split the weak error as
	\begin{equation*}
		\begin{aligned}
			&\ |\mathbb{E}[\phi(X_{\tau}(T))] - \mathbb{E}[\phi(X^{h}(T))]| = |\mathbb{E}[U_{\tau}(T,X(0))] - \mathbb{E}[U_{\tau}(0,X^{h}(T))]|\\
			\le&\ |\mathbb{E}[U_{\tau}(T,X(0))] - \mathbb{E}[U_{\tau}(T,X^{h}(0))]|
			+ |\mathbb{E}[U_{\tau}(T,X^{h}(0))] - \mathbb{E}[U_{\tau}(0,X^{h}(T))]|,
		\end{aligned}
	\end{equation*}
	where $X(0)=\xi$ and $X^{h}(0)=P^{h}\xi$.
	
	By the mean value theorem, \eqref{eq2.2.1.1}, \eqref{eq2.3.2} and Lemma \ref{l2.4.3}, we can bound the first term 
	\begin{equation*}
		\begin{aligned}
			&\ |\mathbb{E}[U_{\tau}(T,X(0))] - \mathbb{E}[U_{\tau}(T,X^{h}(0))]|\\
			=&\ \big|\mathbb{E}\int_0^1 DU_{\tau}(T,\theta \xi+(1-\theta)P^{h}\xi)(I-P^{h})\xi d\theta\big|\\
			=&\ \big|\mathbb{E}\int_0^1 \big( DU_{\tau}(T,\theta \xi+(1-\theta)P^{h}\xi),(I-P^{h})\xi\big) d\theta\big|\\
			\le&\ \mathbb{E}\int_0^1 \big|\big(DU_{\tau}(T,\theta \xi+(1-\theta)P^{h}\xi),(I-P^{h})A^{-\frac{1}{2}}A^{\frac{1}{2}}\xi\big)\big|d\theta\\
			\le&\ \mathbb{E}\int_0^1\|DU_{\tau}(T,\theta \xi+(1-\theta)P^{h}\xi)\|\cdot\|(I-P^{h})A^{-\frac{1}{2}}\|_{\mathcal{L}(H)}\|A^{\frac{1}{2}}\xi\|d\theta\\
			\le&\ C(T,\tau_0,\phi)\|(I-P^{h})A^{-\frac{1}{2}}\|_{\mathcal{L}(H)}\|\xi\|_1\\
			\le&\ C(\xi,T,h_0,\tau_0,\phi)h.
		\end{aligned}
	\end{equation*}
	
	Now we aim to estimate the second term $|\mathbb{E}[U_{\tau}(T,X^{h}(0)) - U_{\tau}(T,X^{h}(T))]|$.
	Applying the mild It\^o formula to $U_{\tau}(T-t,X^{h}(t))$  for $t \in [0,T]$, we get
	\begin{equation}\label{eq2.5.1}
		\begin{aligned}
			&\ U_{\tau}(0,X^{h}(T)) - U_{\tau}(T,X^{h}(0))\\
			=&\ \int_0^T\! -\frac{\partial U_{\tau}}{\partial t}(T-t,X^{h}(t)) + \big(DU_{\tau}(T-t,X^{h}(t)),-A_{h}X^{h}(t)+P^{h}F(X^{h}(t))\big)dt\\
			&+ \int_0^T\big(DU_{\tau}(T-t, X^{h}(t)),P^{h}G(X^{h}(t))dW(t)\big)\\
			&+ \frac{1}{2}\int_0^T\mathrm{Tr}\{D^2U_{\tau}(T-t,X^{h}(t))P^{h}G(X^{h}(t))(P^{h}G(X^{h}(t)))^*\}dt.
		\end{aligned}
	\end{equation}
	According to the Kolmogorov equation \eqref{eq2.4.13}, we have
	\begin{equation*}
		\begin{aligned}
			\frac{\partial U_{\tau}}{\partial t}(T-t,X^{h}(t)) =&\ (DU_{\tau}(T-t,X^{h}(t)), -AX^{h}(t)+F_{\tau}(X^{h}(t)))\\
			&\ + \frac{1}{2}\mathrm{Tr}\{D^2U_{\tau}(T-t,X^{h}(t))G_{\tau}(X^{h}(t))G_{\tau}^*(X^{h}(t))\}.
		\end{aligned}
	\end{equation*}
	Substitute this into \eqref{eq2.5.1}, we obtain
	\begin{equation*}
		\begin{aligned}
			&\ |\mathbb{E}[U_{\tau}(T, X^{h}(0))]-\mathbb{E}[\phi(X^{h}(T))]|\\ 
			\le&\ \big|\mathbb{E}\int_{0}^{T}((A-A_{h}) X^{h}(t), DU_{\tau}(T-t, X^{h}(t))) dt\big|\\
			&\ +\big|\mathbb{E}\int_{0}^{T}( F_{\tau}(X^{h}(t))-P^{h}F(X^{h}(t)), D U_{\tau}(T-t,X^{h}(t))) d t\big|\\
			&\ + \frac{1}{2}\big| \mathbb{E}\int_0^T \mathrm{Tr}\big\{D^2U_{\tau}(T-t, X^{h}(t))G_{\tau}(X^{h}(t))G_{\tau}^*(X^{h}(t))\\
			&\quad\quad\quad-D^2U_{\tau}(T-t, X^{h}(t))P^{h}G(X^{h}(t))(P^{h}G(X^{h}(t)))^*\big\} dt\big|\\
			:=&\ e^1 + e^2 + e^3.
		\end{aligned}
	\end{equation*}
	
	First we estimate $e^1$. In \cite{Thomee2006}, it is pointed out that $R^{h} = A_{h}^{-1}P^{h}A$ on $\dot{H}^1$ and $P^{h}A_{h} = A_{h}P^{h}$, $P^{h}A = AP^{h}$ on $V_h^0$. Then we have
	\begin{equation*}
		P^{h}A-A_{h}P^{h} = A_{h}(A_{h}^{-1}P^{h}A-P^{h})= A_{h}P^{h}(A_{h}^{-1}P^{h} A-I)=A_{h}P^{h}(R^{h}-I),\ \text{on }V_h^0
	\end{equation*}
	Thus we have
	\begin{equation*}
		\begin{aligned}
			&\ ((A-A_{h}) X^{h}(t), D U_{\tau}(T-t, X^{h}(t))) \\
			=&\ ((AP^{h}-P^{h} A_{h}) X^{h}(t), DU_{\tau}(T-t, X^{h}(t))) \\
			=&\ ( X^{h}(t),(P^{h} A-A_{h}P^{h}) DU_{\tau}(T-t, X^{h}(t))) \\
			=&\ ( X^{h}(t), A_{h}P^{h}(A_{h}^{-1}P^{h} A-I) D U_{\tau}(T-t, X^{h}(t))) \\
			=&\ ( X^{h}(t), A_{h}P^{h}(R^{h}-I) D U_{\tau}(T-t, X^{h}(t))).
		\end{aligned}
	\end{equation*}
	Substitute the mild solution $X^{h}(t)$ in \eqref{eq2.3.6} into above equation, we obtain
	\begin{equation*}
		\begin{aligned}
			e^1
			\le&\ \mathbb{E}\int_{0}^{T}\big|( S^{h}(t) X^{h}(0), A_{h}P^{h}(R^{h}-I) DU_{\tau}(T-t, X^{h}(t)))\big| dt+ \\
			&\ \mathbb{E}\int_{0}^{T}\big|\big(\!\int_{0}^{t} S^{h}(t-s)P^{h}F(X^{h}(s)) d s, A_{h}P^{h}(R^{h}-I) DU_{\tau}(T-t, X^{h}(t))\big)\big| dt+\\
			&\ \big|\mathbb{E}\int_{0}^{T}\big(\int_{0}^{t} S^{h}(t-s)P^{h} G(X^{h}(s)) dW(s), A_{h}P^{h}(R^{h}-I) DU_{\tau}(T-t, X^{h}(t))\big)dt\big|\\
			:=&\ e^{1,1}+e^{1,2}+e^{1,3}.
		\end{aligned}
	\end{equation*}
	
	Applying Cauchy-Schwartz inequality, \eqref{eq2.3.0}, \eqref{eq2.3.2} and Lemma \ref{l2.4.3}, we have for $0<\varepsilon<1$
	\begin{equation*}
		\begin{aligned}
			e^{1,1}
			=&\ \mathbb{E}\int_{0}^{T}|( A_{h}^{1-\frac{1}{2}\varepsilon}S^{h}(t) A_{h}^{\frac{1}{2}}X^{h}(0), A_{h}^{-\frac{1}{2}+\frac{1}{2}\varepsilon}P^{h}(R^{h}-I) DU_{\tau}(T-t, X^{h}(t)))| dt \\
			\le&\ \mathbb{E}\int_{0}^{T} \|A_{h}^{1-\frac{1}{2}\varepsilon}S^{h}(t)A_{h}^{\frac{1}{2}}P^{h}\xi\|\cdot \|A_{h}^{-\frac{1}{2}+\frac{1}{2}\varepsilon}P^{h}(R^{h}-I) DU_{\tau}(T-t, X^{h}(t))\| dt\\
			\le&\ \mathbb{E}\!\!\int_{0}^{T}\!\! \|A_{h}^{1-\frac{1}{2}\varepsilon}S^{h}(t)\|_{\mathcal{L}(H)}\|A_{h}^{\frac{1}{2}}P^{h}\xi\|\!\cdot\! \|A_{h}^{-\frac{1}{2}+\frac{1}{2}\varepsilon}P^{h}(R^{h}-I)\|_{\mathcal{L}(H)} \|DU_{\tau}(T-t, X^{h}(t))\| dt\\
			\le&\ \int_{0}^{T} C(h_0,\varepsilon)t^{-1+\frac{1}{2}\varepsilon}\cdot\|\xi\|_1\cdot C(h_0,\varepsilon)h^{1-\varepsilon}\cdot\mathbb{E}\|DU_{\tau}(T-t, X^{h}(t))\| dt\\
			\le&\ \int_{0}^{T} C(h_0,\varepsilon)t^{-1+\frac{1}{2}\varepsilon}\cdot\|\xi\|_1\cdot C(h_0,\varepsilon)h^{1-\varepsilon}\cdot C(T,\tau_0,\phi)dt\\
			\le&\ C(\xi,T,h_0,\tau_0,\varepsilon,\phi)h^{1-\varepsilon}.
		\end{aligned}
	\end{equation*}
	Here we have used the fact that $\int_0^Tt^{-1+\frac{1}{2}\varepsilon}dt=C(T,\varepsilon)<+\infty$.
	Applying \eqref{eq2.2.5} and Lemma \ref{l2.3.1}, we get
	\begin{equation}\label{eq2.5.3}
		\mathbb{E}\|F(X^{h}(t))\|_1 \le C_F\mathbb{E}\big((1+\|X^{h}(t)\|_V^2)\|X^{h}(t)\|_1\big) \le C(\xi,T).
	\end{equation}
	Then, in a similar way, we get
	\begin{equation*}
		e^{1,2} \le C(\xi,T,h_0,\tau_0,\varepsilon,\phi)h^{1-\varepsilon}.
	\end{equation*}
	According to the Malliavin integration by parts formula, Lemmas \ref{l2.2.1} and \ref{l2.4.3}, Cauchy-Schwartz inequality, \eqref{eq2.2.0}, \eqref{eq2.2.1}, \eqref{eq2.2.00}, \eqref{eq2.3.2}, H\"older inequality and Theorem \ref{th2.3.1}, we obtain
	\begin{equation*}
		\begin{aligned}
			e^{1,3}
			=&\ \big|\mathbb{E}\int_{0}^{T}\big(\int_{0}^{t} S^{h}(t-s)P^{h} G(X^{h}(s)) dW(s),A_{h}P^{h}(R^{h}-I) DU_{\tau}(T-t, X^{h}(t))\big) dt\big|\\
			=&\ \big|\mathbb{E}\int_{0}^{T}\int_{0}^{t}\big( S^{h}(t-s)P^{h} G(X^{h}(s)),A_{h}P^{h}(R^{h}-I)D^2U_{\tau}(T-t, X^{h}(t))\mathcal{D}_sX^{h}(t)\big)_{\mathcal{L}_2(H)} dsdt\big|\\
			=&\ \big|\mathbb{E}\int_{0}^{T}\int_{0}^{t}\big( A_{h}^{1-\frac{1}{2}\varepsilon}S^{h}(t-s)A_{h}^{\frac{1}{2}}P^{h}G(X^{h}(s)), A_{h}^{-\frac{1}{2}+\frac{1}{2}\varepsilon}P^{h}(R^{h}-I)\cdot\\
			&\quad\quad\quad\quad\quad D^2U_{\tau}(T-t, X^{h}(t))A_{h}^{-\frac{1}{2}+\frac{1}{2}\varepsilon}A_{h}^{\frac{1}{2}-\frac{1}{2}\varepsilon}\mathcal{D}_sX^{h}(t)\big)_{\mathcal{L}_2(H)} dsdt\big|\\
			\le&\ \mathbb{E}\int_{0}^{T}\!\int_{0}^{t}\|A_{h}^{1-\frac{1}{2}\varepsilon}S^{h}(t-s)\|_{\mathcal{L}(H)}\|A_{h}^{\frac{1}{2}}P^{h}G(X^{h}(s))\|_{\mathcal{L}_2(H)}\|A_{h}^{-\frac{1}{2}+\frac{1}{2}\varepsilon}P^{h}(R^{h}-I)\|_{\mathcal{L}(H)}\cdot\\
			&\quad\quad\quad\quad\quad\|D^2U_{\tau}(T-t, X^{h}(t))\|_{\mathcal{L}(H)}\|A_{h}^{-\frac{1}{2}+\frac{1}{2}\varepsilon}\|_{\mathcal{L}_2(H)}\|A_{h}^{\frac{1}{2}-\frac{1}{2}\varepsilon}\mathcal{D}_sX^{h}(t)\|_{\mathcal{L}(H)} dsdt\\
			\le&\ \mathbb{E}\int_{0}^{T}\int_{0}^{t} C(h_0,\varepsilon)(t-s)^{-1+\frac{1}{2}\varepsilon}\cdot C\|X^{h}(s)\|_1\cdot C(h_0,\varepsilon)h^{1-\varepsilon}\cdot\\
			&\quad\quad\quad\quad\quad C(T,\tau_0,\phi)(1+\|X^{h}(t)\|_1^3)\cdot C\cdot\|A_{h}^{\frac{1}{2}-\frac{1}{2}\varepsilon}\mathcal{D}_sX^{h}(t)\|_{\mathcal{L}(H)} dsdt\\
			\le&\ C(T,h_0,\tau_0,\varepsilon,\phi)h^{1-\varepsilon}\int_{0}^{T}\int_{0}^{t} (t-s)^{-1+\frac{1}{2}\varepsilon}\big(\mathbb{E}\|X^{h}(s)\|_1^4\big)^{\frac{1}{4}}\cdot\\
			&\quad\quad\quad\quad\quad\big(\mathbb{E}(1+\|X^{h}(t)\|_1^3)^4\big)^{\frac{1}{4}}(\mathbb{E}\|A_{h}^{\frac{1}{2}-\frac{1}{2}\varepsilon}\mathcal{D}_sX^{h}(t)\|^2_{\mathcal{L}(H)})^{\frac{1}{2}}dsdt\\
			\le&\ C(\xi,T,h_0,\tau_0,\varepsilon,\phi)h^{1-\varepsilon}.
		\end{aligned}
	\end{equation*}
	Here we use the fact that $ \displaystyle\int_{0}^{T}\int_{0}^{t}(t-s)^{-1+\frac{1}{2}\varepsilon}dsdt = (\frac{1}{2}\varepsilon(\frac{1}{2}\varepsilon+1))^{-1}T^{\frac{1}{2}\varepsilon+1}<+\infty$.
	Thus, we conclude that 
	\begin{equation*}
		e^1 \le Ch^{1-\varepsilon}.
	\end{equation*}
	
	Next, we turn to investigate $e^2$. By triangle inequality, we have
	\begin{equation*}
		\begin{aligned}
			e^2 
			=&\ \big|\mathbb{E}\int_{0}^{T}\big( F_{\tau}(X^{h}(t))\pm F(X^{h}(t))-P^{h}F(X^{h}(t)), DU_{\tau}(T-t,X^{h}(t))\big) dt\big|\\
			\le&\ 
			\mathbb{E}\int_{0}^{T}\big|\big( F_{\tau}(X^{h}(t))-F(X^{h}(t)), D U_{\tau}(T-t,X^{h}(t))\big)\big| dt\\
			&+ \mathbb{E}\int_{0}^{T}\big|\big( (I-P^{h})F(X^{h}(t)), D U_{\tau}(T-t,X^{h}(t))\big)\big| dt\\
			:=&\ e^{2,1}+e^{2,2}.
		\end{aligned}
	\end{equation*}
	By Cauchy-Schwartz inequality, \eqref{eq2.4.9}, Lemmas \ref{l2.4.3} and \ref{l2.3.1}, we have
	\begin{equation*}
		\begin{aligned}
			e^{2,1} \le&\ \mathbb{E}\int_{0}^{T}\|F_{\tau}(X^{h}(t))-F(X^{h}(t))\|\cdot \|D U_{\tau}(T-t,X^{h}(t))\| dt\\
			\le&\ \int_{0}^{T}\mathbb{E}(1+\|X^{h}(t)\|_V^5)\tau \cdot C(T,\tau_0,\phi)dt\\
			\le&\ C(\xi,T,\tau_0,\phi)\tau.
		\end{aligned}
	\end{equation*}
	By Cauchy-Schwartz inequality, \eqref{eq2.2.5}, \eqref{eq2.3.2}, \eqref{eq2.5.3}, Lemmas \ref{l2.4.3} and \ref{l2.3.1}, we have
	\begin{equation*}
		\begin{aligned}
			e^{2,2} =&\ \mathbb{E}\int_{0}^{T}\big|\big( (I-P^{h})A^{-\frac{1}{2}}A^{\frac{1}{2}}F(X^{h}(t)), DU_{\tau}(T-t,X^{h}(t))\big)\big| dt\\
			\le&\ \mathbb{E}\int_0^T \|(I-P^{h})A^{-\frac{1}{2}}\|_{\mathcal{L}(H)}\cdot\|A^{\frac{1}{2}}F(X^{h}(t))\|\cdot \|DU_{\tau}(T-t,X^{h}(t))\| dt\\
			\le&\ \int_0^TC(h_0,\varepsilon)h\cdot\mathbb{E}\|F(X^{h}(t))\|_1\cdot C(T,\tau_0,\phi)dt\\
			\le&\ C(\xi,T,h_0,\tau_0,\phi)h.
		\end{aligned}
	\end{equation*}
	Thus, we conclude that 
	\begin{equation*}
		e^2 \le C(\tau+h).
	\end{equation*}
	
	At last, we study $e^3$. Obviously, we have
	\begin{equation*}
		\begin{aligned}
			&\ G_{\tau}(\eta)G_{\tau}^*(\eta)-P^{h}G(\eta)G^*(\eta)P^{h}\\
			=&\ G_{\tau}(\eta)G_{\tau}^*(\eta)\pm G_{\tau}(\eta)G^*(\eta) \pm G(\eta)G^*(\eta) \pm P^{h}G(\eta)G^*(\eta)-P^{h}G(\eta)G^*(\eta)P^{h}\\
			=&\ G_{\tau}(\eta)(G_{\tau}^*(\eta)-G^*(\eta)) +  (G_{\tau}(\eta)-G(\eta))G^*(\eta)\\
			&\ + (I-P^{h})G(\eta)G^*(\eta)+P^{h}G(\eta)G^*(\eta)(I-P^{h}).
		\end{aligned}
	\end{equation*}
	Then
	\begin{equation*}
		\begin{aligned}
			e^3 \le&\ \frac{1}{2} \mathbb{E}\int_0^T \big|\mathrm{Tr}\{G_{\tau}(X^{h}(t))\big(G_{\tau}^*(X^{h}(t))-G^*(X^{h}(t))\big)D^2U_{\tau}(T-t, X^{h}(t))\}\big|dt\\
			&\ +\frac{1}{2} \mathbb{E}\int_0^T \big|\mathrm{Tr}\{\big(G_{\tau}(X^{h}(t))-G(X^{h}(t))\big)G^*(X^{h}(t))D^2U_{\tau}(T-t, X^{h}(t))\}\big|dt\\
			&\ +\frac{1}{2}\mathbb{E}\int_0^T |\mathrm{Tr}\{ P^{h}G(X^{h}(t))G^*(X^{h}(t))(I-P^{h})D^2U_{\tau}(T-t, X^{h}(t))\}|dt \\
			&\ +\frac{1}{2}\mathbb{E}\int_0^T| \mathrm{Tr}\{ (I-P^{h})G(X^{h}(t))G^*(X^{h}(t))D^2U_{\tau}(T-t, X^{h}(t))\}| dt\\
			:=&\ e^{3,1} + e^{3,2} + e^{3,3}+e^{3,4}. 
		\end{aligned}
	\end{equation*}
	
	By \eqref{eq2.2.0}, \eqref{eq2.2.1}, Lemmas \ref{l2.4.2}, \ref{l2.4.3},  and \ref{l2.3.1} we have
	\begin{equation*}
		\begin{aligned}
			e^{3,1}
			\le&\ \frac{1}{2}\mathbb{E}\int_0^T \|G_{\tau}(X^{h}(t))\big(G_{\tau}^*(X^{h}(t))-G^*(X^{h}(t))\big)\|_{\mathcal{L}_1(H)}\cdot\|D^2U_{\tau}(T-t, X^{h}(t))\|_{\mathcal{L}(H)}dt\\
			\le&\ \frac{1}{2}\mathbb{E}\int_0^T \|G_{\tau}(X^{h}(t))\|_{\mathcal{L}(H)}\|G_{\tau}(X^{h}(t))-G(X^{h}(t))\|_{\mathcal{L}_2(H)}\cdot C(T,\tau_0,\phi)(1+\|X^{h}(t)\|_1^3)dt\\
			\le&\ C(T,\tau_0,\phi)\mathbb{E}\int_0^TC(1+\|X^{h}(t)\|_V)\cdot C\tau(1+\|X^{h}(t)\|_V^3)\cdot (1+\|X^{h}(t)\|_1^3) dt\\
			\le&\ C(\xi,T,\tau_0,\phi)\tau.
		\end{aligned}
	\end{equation*}
	Similarly, we obtain $e^{3,2}\le C(\xi,T,\tau_0,\phi)\tau$.
	By \eqref{eq2.2.0}, \eqref{eq2.2.1}, \eqref{eq2.3.2}, Lemmas \ref{l2.2.1}, \ref{l2.4.3}, \ref{l2.3.1}, we have
	\begin{equation*}
		\begin{aligned}
			e^{3,3}
			=&\ |\frac{1}{2}\mathbb{E}\int_0^T \mathrm{Tr}\{P^{h}G(X^{h}(t))G^*(X^{h}(t))A^{\frac{1}{2}}(I-P^{h})A^{-\frac{1}{2}}D^2U_{\tau}(T-t, X^{h}(t))\}dt|\\
			\le&\ \frac{1}{2}\mathbb{E}\int_0^T  \|P^{h}G(X^{h}(t))(A^{\frac{1}{2}}G(X^{h}(t)))^*\|_{\mathcal{L}_2(H)}\cdot\|(I-P^{h})A^{-\frac{1}{2}}D^2U_{\tau}(T-t, X^{h}(t))\|_{\mathcal{L}(H)}dt \\
			\le&\ \frac{1}{2}\mathbb{E}\int_0^T  \|P^{h}G(X^{h}(t))\|_{\mathcal{L}(H)}\cdot\|A^{\frac{1}{2}}G(X^{h}(t))\|_{\mathcal{L}_2(H)}\cdot\|(I-P^{h})A^{-\frac{1}{2}}\|_{\mathcal{L}(H)}\cdot\\
			&\quad\quad\quad\quad\|D^2U_{\tau}(T-t, X^{h}(t))\|_{\mathcal{L}(H)}dt \\
			\le&\ \frac{1}{2}\mathbb{E}\int_0^T  C(1+\|X^{h}(t)\|)\cdot C\|X^h(t)\|_1\cdot C(T,\tau_0,\phi)(1+\|X^{h}(t)\|_1^3)dt\\
			\le&\ C(T,h_0,\tau_0,\phi)h\cdot\mathbb{E}\int_0^T  (1+\|X^{h}(t)\|)\cdot\|X^{h}(t)\|_1\cdot(1+\|X^{h}(t)\|_1^3)dt \\
			\le&\ C(\xi,T,h_0,\tau_0,\phi)h.
		\end{aligned}
	\end{equation*}	
	Similarly, we can prove $e^{3,4} \le Ch$. 
	Thus, we have proved
	\begin{equation*}
		e^{3} \le C(\xi,T,h_0,\tau_0,\phi)(\tau+h).
	\end{equation*} 
	
	In terms of the estimates for $e^1$, $e^2$ and $e^3$, we finally obtain
	\begin{equation*}
		|\mathbb{E}[\phi(X_{\tau}(T))]-\mathbb{E}[\phi(X^{h}(T))]| \le C(\xi,T,h_0,\tau_0,\varepsilon,\phi)(\tau+ h^{1-\varepsilon}).
	\end{equation*}
	Thus, the proof is completed.
\end{proof}

Now, we are ready to study the weak convergence order for the semi-discretized FEM approximation.

\begin{theorem}\label{th2.5.2}
	Assume {\bf(A1)}-{\bf(A4)}. Then for any given $0<\varepsilon<1$ and $\phi \in \mathcal{C}_b^2(H)$, there exists a positive constant $C = C(\xi,T,h_0,\varepsilon,\phi)$, such that for all $h \in (0,h_0)$
	\begin{equation*}
		|\mathbb{E}[\phi(X(T))]-\mathbb{E}[\phi(X^{h}(T))]| \le Ch^{1-\varepsilon}.
	\end{equation*}
\end{theorem}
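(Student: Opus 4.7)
The proof is essentially a synthesis step that combines the two convergence results already established: the temporal strong convergence $X_\tau \to X$ of order one (Theorem~\ref{t2.4.1}) and the mixed weak estimate between $X_\tau$ and $X^h$ of order $\tau+h^{1-\varepsilon}$ (Theorem~\ref{th2.5.1}). My plan is to introduce the auxiliary solution as an intermediate object, split the weak error by triangle inequality, control one piece by strong-to-weak reduction and the other by direct quotation, and finally send $\tau\to 0$ to eliminate the artificial temporal parameter.

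Concretely, I would fix an arbitrary $\tau\in(0,\tau_0)$ and write
\begin{equation*}
|\mathbb{E}[\phi(X(T))]-\mathbb{E}[\phi(X^{h}(T))]|
\le |\mathbb{E}[\phi(X(T))]-\mathbb{E}[\phi(X_{\tau}(T))]|
+ |\mathbb{E}[\phi(X_{\tau}(T))]-\mathbb{E}[\phi(X^{h}(T))]|.
\end{equation*}
For the first term, I would exploit $\phi\in\mathcal{C}_b^2(H)$: by the mean value theorem together with the Riesz-representation bound $|D\phi(u)h|\le|\phi|_{\mathcal{C}_b^1(H)}\|h\|$ from \eqref{eq2.2.1.1}, one has
\begin{equation*}
|\mathbb{E}[\phi(X(T))]-\mathbb{E}[\phi(X_{\tau}(T))]|
\le |\phi|_{\mathcal{C}_b^1(H)}\,\mathbb{E}\|X(T)-X_{\tau}(T)\|
\le |\phi|_{\mathcal{C}_b^1(H)}\bigl(\mathbb{E}\|X(T)-X_{\tau}(T)\|^2\bigr)^{1/2},
\end{equation*}
which by Theorem~\ref{t2.4.1} is bounded by $C(\xi,T,\tau_0,\phi)\tau$. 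For the second term, Theorem~\ref{th2.5.1} gives the bound $C(\xi,T,h_0,\tau_0,\varepsilon,\phi)(\tau+h^{1-\varepsilon})$. Combining these yields
\begin{equation*}
|\mathbb{E}[\phi(X(T))]-\mathbb{E}[\phi(X^{h}(T))]|
\le C(\xi,T,h_0,\tau_0,\varepsilon,\phi)\bigl(\tau+h^{1-\varepsilon}\bigr).
\end{equation*}

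The crucial observation that closes the argument is that the left-hand side is independent of $\tau$, while the constant on the right depends only on the fixed upper bound $\tau_0$ (not on $\tau$ itself). Thus I may pass to the limit $\tau\to 0^+$ on the right-hand side for any fixed $h\in(0,h_0)$, obtaining
\begin{equation*}
|\mathbb{E}[\phi(X(T))]-\mathbb{E}[\phi(X^{h}(T))]|\le C(\xi,T,h_0,\varepsilon,\phi)\,h^{1-\varepsilon},
\end{equation*}
as claimed. There is really no technical obstacle here; the entire difficulty has already been absorbed into Theorems~\ref{t2.4.1} and~\ref{th2.5.1}. The only point requiring care is to verify that the constant produced by Theorem~\ref{th2.5.1} is genuinely uniform in $\tau\in(0,\tau_0)$ — which is exactly how that theorem has been phrased, since the a priori estimates for $F_\tau$, $G_\tau$ and for the Kolmogorov derivatives $DU_\tau,D^2U_\tau$ (Lemmas~\ref{l2.4.2.1}, \ref{l2.4.2} and~\ref{l2.4.3}) are all uniform in $\tau\in(0,\tau_0)$. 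This uniformity is what legitimizes sending $\tau\to 0$ and is the conceptual payoff of the splitting-up construction of Section~\ref{Sect3}.
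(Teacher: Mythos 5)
Your proposal is correct and follows essentially the same route as the paper's own proof: triangle inequality through $X_\tau(T)$, reduction of the first term to the strong error via the Lipschitz bound on $\phi$ and Theorem~\ref{t2.4.1}, direct quotation of Theorem~\ref{th2.5.1} for the second term, and then sending $\tau\to 0$ using the uniformity of the constants in $\tau\in(0,\tau_0)$. Nothing further is needed.
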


\begin{proof}
	According to Theorems \ref{t2.4.1} and \ref{th2.5.1}, we obtain
	\begin{equation*}
		\begin{aligned}
			&\ |\mathbb{E}[\phi(X(T))]-\mathbb{E}[\phi(X^{h}(T))]|\\ 
			\le&\ |\mathbb{E}[\phi(X(T))]-\mathbb{E}[\phi(X_{\tau}(T))]|+ |\mathbb{E}[\phi(X_{\tau}(T))]-\mathbb{E}[\phi(X^{h}(T))]|\\
			\le&\ C(\mathbb{E}\|X(T)-X_{\tau}(T)\|^2)^{\frac{1}{2}}+ |\mathbb{E}[\phi(X_{\tau}(T))]-\mathbb{E}[\phi(X^{h}(T))]|\\
			\le&\ C\tau+C(\tau+ h^{1-\varepsilon}).
		\end{aligned}
	\end{equation*}
	Let $\tau\to 0$ in above equation, then we finish the proof of theorem.
\end{proof}

\section{Numerical experiments}
\label{Sect6}
Consider a stochastic Allen-Cahn equation for $(t,x)\in (0,1)\times(0,1)$
\begin{equation*}
	dX(t,x)-\Delta X(t,x)dt = (X(t,x)-X^3(t,x))dt+G(X(t,x))dW(t),
\end{equation*}
where $G(X) = I+B_1X+\sin(X)A^{-\frac{1}{4}-\frac{1}{2}\delta}$ with $\delta =0.001$, $B_1$ is a bounded linear operator on $\dot{H}^{-\frac{1}{2}-\delta}$ determined by $B_1\varphi_j = j^{-1}\varphi_j$. Therefore, $G$ satisfies the assumption {\bf(A2)}. We take an initial value function $\xi \in \dot{H}^1$ of the form
\begin{equation*}
	\xi(x)=\begin{cases}
		x,x\in[0,\frac{1}{2}),\\
		1-x,x\in[\frac{1}{2},1],
	\end{cases}
\end{equation*}
and choose $\phi(X) = \sin (\|X\|)$ to measure the weak error. Fix time step size $\kappa = \frac{1}{1000}$, we compute $M=20000$ independent sample trajectories for each  spatial step size $h_i = 2^{-i}$ $(i=5,6,7,8,12)$, respectively, and regard the numerical solution computed with step size $h_{12} = 2^{-12}$ as the reference 'real' solution. In order to emphasize the feature of weak convergence, we simultaneously calculate strong and weak errors and corresponding convergence orders, which are shown in Table \ref{table2.1}. 
We also plot the errors versus $h$ in log-log scale to observe the convergence order, see Figure \ref{figure2.1}.

\begin{table*}[h]
	\begin{center}  
	\centering
		\caption{Errors and convergence orders}
		\label{table2.1}
		\begin{tabular}{c|cc|cc} 
			\toprule     
			$h$ & strong error & order & weak error & order\\ 
			\midrule   
			$2^{-5}$ & 7.3009 $\times 10^{-3}$ & &  3.0341 $\times 10^{-3}$ & \\
			$2^{-6}$ & 4.6211 $\times 10^{-3}$ & 0.6184 & 1.3887 $\times 10^{-3}$ & 1.1275\\
			$2^{-7}$ & 2.9596 $\times 10^{-3}$ & 0.6428 & 6.3082 $\times 10^{-4}$ & 1.1385\\
			$2^{-8}$ & 1.9612 $\times 10^{-3}$ &  0.5937 & 2.9261 $\times 10^{-4}$ & 1.1082\\
			\bottomrule     
		\end{tabular}
	\end{center}   
\end{table*}

\begin{figure}[H]
	\centering
	\includegraphics[width=0.5\linewidth]{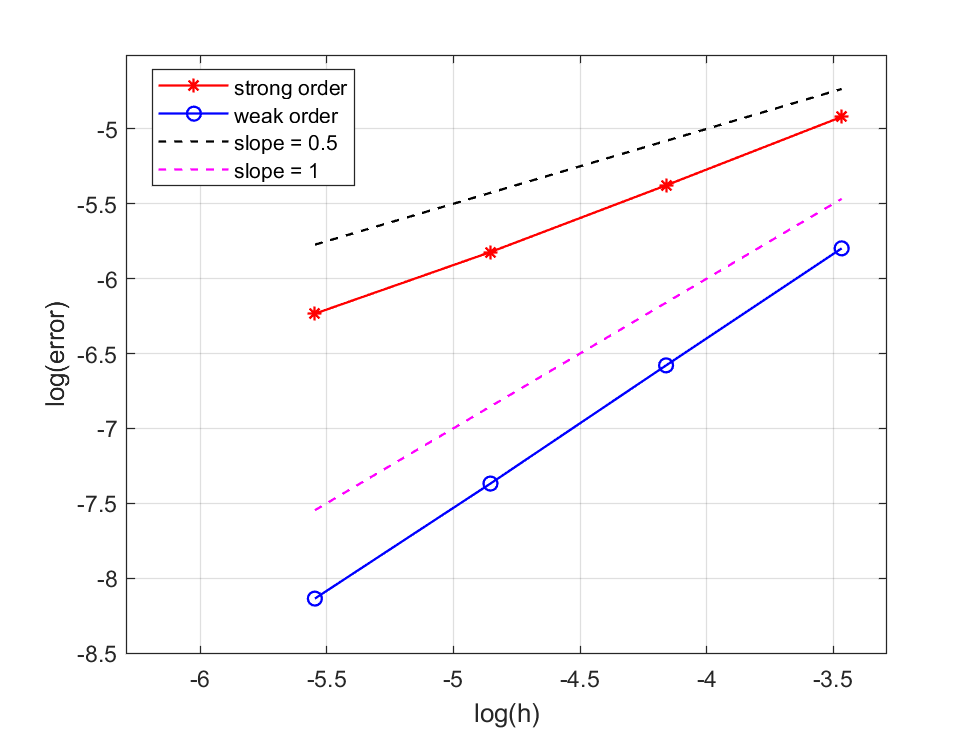}
	\caption{Convergence order in $\log-\log$ scale}
	\label{figure2.1}
\end{figure}

\section{Conclusion and Discussion}
In this paper, we investigated the optimal weak convergence order for an FEM approximation to a stochastic Allen-Cahn equation driven by multiplicative white noise. We first constructed an auxiliary equation based on the splitting-up technique and derived the strong convergence order of 1 in time between the auxiliary and exact solutions. Then, we analyzed the weak errors between the FEM approximation and the exact solution and obtained the optimal weak convergence order in space. 

It is well known that high-dimensional SPDEs driven by space-time white noise are generally ill-posed. Consequently, the results presented in this paper do not extend to the high-dimensional stochastic Allen-Cahn equation driven by multiplicative noise. We plan to address this challenge in future work by applying the methodologies developed in this paper.

\section*{Acknowledgments}
This work is supported by Jilin Provincial Department of Science and Technology grant 20240301017GX; National Natural Science Foundation of China grants 12171199, 11971198 and 22341302; National Key Research and Development Program of China grants 2020YFA0713602 and 2023YFA1008803; and Key Laboratory of Symbolic Computation and Knowledge Engineering of Ministry of Education of China housed at Jilin University.

\section*{Compliance with Ethical Standards}
The authors declare that there are no conflicts of interest. 

\bibliography{references}
\end{document}